\numberwithin{equation}{section}
\newtheorem{lemma}{Lemma}[section]
\newtheorem{theorem}{Theorem}[section]
\newtheorem{definition}{Definition}[section]
\theoremstyle{definition}
\newtheorem{remark}{Remark}[section]
\begin{document}
	\title[Rough isometry  and unbounded uniformizations]{Rough isometry between Gromov hyperbolic spaces and unbounded uniformization}
	
	\author[Vasudevarao Allu]{Vasudevarao Allu}
\address{Vasudevarao Allu, Department of Mathematics, School of Basic Sciences, 
Indian Institute of Technology Bhubaneswar,
Bhubaneswar-752050, 
Odisha, India.}
\email{avrao@iitbbs.ac.in}
\author[Alan P Jose]{Alan P Jose}
\address{Alan P Jose, Department of Mathematics, School of Basic Sciences, Indian Institute of Technology Bhubaneswar,
Bhubaneswar-752050, Odisha, India.}
\email{alanpjose@gmail.com}

	\subjclass[2020]{Primary 53C23; Secondary 30L05}
	
	\date{\today}
	\keywords{Gromov hyperbolic space, rough isometry, uniform domain, uniformization, Busemann function}
	
	\begin{abstract} 
In a recent paper, Zhou, Ponnusamy, and Rasila [\textit{Math. Nachr.} (2025)] have established that the conformal deformations, with parameter $\epsilon>0$, of a Gromov hyperbolic space via Busemann functions are uniform spaces for sufficiently small $\epsilon$.
In this paper, we demonstrate that if two proper, roughly starlike Gromov hyperbolic spaces are roughly isometric, then the uniformity of their conformal deformations is a simultaneous property; that is, either both are uniform spaces or neither is. 
Our results provide a counterpart to the work of Lindquist and Shanmugalingam [\textit{Ann. Fenn. Math.} (2021)].
	\end{abstract}
	
	\maketitle
	
	\section{Introduction}\label{int}
A locally compact, rectifiably connected, non-complete metric space $X$ is said to be uniform if any two points in $X$ can be joined by a curve that does not go too near the boundary and whose length is comparable to the distance between the points. 
For a precise definition, we refer to Definition \ref{allu_jose_004_001ab}.
Uniform domains were introduced by Martio and Sarvas \cite{martio_sarvas_} in the setting of Euclidean spaces. 
The article by V\"ais\"al\"a \cite{vaisalaunif} provides an exhaustive treatment of various possible definitions of uniform domains. 
The importance of the class of uniform domains is well established, for instance P. W. Jones has established that uniform domains are Sobolev extension domains, and characterized them as BMO extension domains, (see \cites{jones_1980, jones_1981}). 
Further, the class of uniform domains has been studied extensively in geometric function theory, potential theory, and analysis on metric spaces (see \cites{geh87, vaisalaunif, BHK, HSX, BS07, koskela_lammi_2013} and the references therein).\vspace{2mm} 

Bonk, Heinonen, and Koskela \cite{BHK} in their seminal work demonstrated that there is a one-to-one correspondence between the quasi-isometry classes of proper geodesic roughly starlike Gromov hyperbolic spaces and the quasisimilarity classes of bounded locally compact uniform spaces.   
Moreover, they developed a uniformization theory which illustrated that the conformal deformations of a proper geodesic Gromov hyperbolic space are bounded uniform spaces. 
The theory of Gromov hyperbolic spaces originates in Gromov’s influential work \cite{gromov} on hyperbolic groups and has since become a central theme in analysis and geometry. 
In geometric function theory, they provide a natural setting for the study of quasiconformal mappings, offering a flexible framework to extend classical results beyond the Euclidean context  (see \cites{bonk_1996, bonk_schramm, HSX, zhou_li_rasila_2022, zhou_rasila_2022}). 
A geodesic metric space is Gromov hyperbolic if every geodesic triangle is thin, in the sense that any point on one edge is not too far from some point on the other two sides of the geodesic triangle.  
Note that Gromov hyperbolicity can be defined for spaces which are not necessarily geodesic (see Definition \ref{allu_jose_004_001c}), we refer to the book \cite{bridson} for an introduction into metric spaces of non-negative curvature. In this paper, we follow the general definition of Gromov hyperbolicity given in Definition \ref{allu_jose_004_001c}.
\vspace{2mm}

In 2001, Bonk, Heinonen, and Koskela \cite{BHK} used conformal densities which depends upon distance from a fixed point in the space. 
Recently, Zhou, Ponnusamy, and Rasila \cite{zhou_ponnusamy_rasila_2025} studied conformal densities via Busemann functions (see Subsection \ref{allu_jose_004_003aa} for definition) and proved that the conformal deformations $X_\epsilon = (X, d_\epsilon)$ induced by the densities \eqref{allu_jose_004_005} of a proper geodesic Gromov $\delta$-hyperbolic space $X$ that has atleast two points in the Gromov boundary are unbounded uniform spaces for all $0<\epsilon\leq \epsilon_0(\delta)$. 
It is worth to point out that the authors have developed uniformization techniques for hyperbolic spaces which are intrinsic but not necessarily geodesic, (see \cites{allu_jose_2, allu_jose_3}).
\vspace{2mm}

Let $(X, d_X)$ and $(Y, d_Y)$ be metric spaces. 
A map $\phi : Y \rightarrow X$ is a $\lambda$-rough isometry if 
\begin{equation}\label{allu_jose_004_001}
d_Y(y_1, y_2)-\lambda \leq d_X\left(\phi(y_1), \,\phi(y_2)\right) \leq d_Y(y_1, y_2)+\lambda
\end{equation} 
for all $y_1, y_2$ in $Y$ and $\phi(Y)$ is $\tau$-dense in $X$, that is for each $x$ in $X$ there exists a $y_x$ in $Y$ such that $d_X(x, \phi(y_x)) \leq \tau$. 
Moreover, it is known that (see \cite[Lemma 2.7]{lindquist_shanmu_2021}) there exists a $3\lambda$-rough isometry $\phi^{-1}: X \rightarrow Y$ such that for all $y\in Y$ and $x\in X$, we have 
\begin{equation}\label{allu_jose_004_001aa}
d_Y\left(y, \phi^{-1}\left(\phi(y)\right)\right) \leq 2\lambda,\,\,\,\,\, d_X\left(x, \phi\left(\phi^{-1}(x)\right)\right) \leq \lambda.
\end{equation}
Hence, by  replacing $3\lambda$ with $\lambda$ if needed, we will assume in the remainder of this paper that both $\phi$ and $\phi^{-1}$ are $\lambda$-rough isometries satisfying \eqref{allu_jose_004_001aa}.
\vspace{2mm}

If $X$ is a Gromov hyperbolic space , then the conformal deformations $X_\epsilon$ of $X$ via the Busemann functions are unbounded uniform spaces for sufficiently small $\epsilon$ by \cite[Theorem 1.2]{zhou_ponnusamy_rasila_2025}. 
The aim of this paper is to enquire whether the allowable range of uniformization parameters is preserved under rough isometries.
Our main theorem (Theorem \ref{allu_jose_004_024}) establishes that if  $\phi:Y\rightarrow X$ is a rough isometry, where $X$ is Gromov hyperbolic, and $\epsilon>0$ is such that $X_\epsilon$ is a uniform space, then $Y_\epsilon$ is also a uniform space. 
It should be noted that $Y_\epsilon$ is uniform for sufficiently small $\epsilon$ by the results in \cite{zhou_ponnusamy_rasila_2025}. 
The restriction on the parameter $\epsilon >0$ is due to a Gehring-Hayman inequality for conformal deformations \cite[Theorem 5.1]{BHK} that holds for sufficiently small $\epsilon$, which is a crucial tool in both \cites{BHK, zhou_ponnusamy_rasila_2025}. 
\vspace{2mm}

Note that the question of whether the allowable range of uniformizing parameters is preserved under rough isometries was addressed in \cite{lindquist_shanmu_2021}, where uniformization procedure was due to Bonk, Heinonen, and Koskela \cite{BHK}. \
In this paper, we examine the same question using the uniformization procedure of Zhou, Ponnusamy, and Rasila \cite{zhou_ponnusamy_rasila_2025}. 
We emphasize that the results of \cite{lindquist_shanmu_2021} do not apply in our setting, since here the conformal densities arise from Busemann functions.
\vspace{2mm}

The remainder of this paper is organized as follows. Section \ref{allu_jose_004_001abc}  is dedicated to provide relevant definitions and important preliminary results. In Section \ref{allu_jose_004_009aa}, we develop key lemmas necessary for the proof of the main result. The statement and proof of our main result, Theorem \ref{allu_jose_004_024}, are presented in Section \ref{allu_jose_004_024a}. Finally, in Section \ref{allu_jose_004_026}, a counterexample is provided to show that the conclusion of Theorem \ref{allu_jose_004_024} fails when rough isometries are replaced by rough similarities.

\section{Preliminaries}\label{allu_jose_004_001abc}
\subsection{Notations}
We follow the notation from \cite{buyalo}. 
For real numbers $a$ and $b$, we write $a \doteq b$ up to an additive error $\leq c$, or $a \doteq_c b$, in place of $|a-b| \leq c$. 
Similarly, we write $A \lesssim B$ if there exists a constant $C>0$ such that $A \leq C B$, and $A \gtrsim B$ if $B \lesssim A$. 
We write $A \asymp B$ up to a multiplicative error $\leq C$, or $A \asymp_C B$, in place of $C^{-1} B \leq A \leq C B$. 
When the exact constant is either irrelevant or clear from the context, we simply write $a \doteq b$ and $A \asymp B$.
Throughout this paper, for a metric space $(X,d_X)$, a point $x\in X$, and $r>0$, we denote
\[
B(x,r)=\{z\in X : d_X(x,z)<r\}, \qquad 
\overline{B}(x,r)=\{z\in X : d_X(x,z)\leq r\}.
\]
\begin{definition}\label{allu_jose_004_001ab}
Let $(X, d_X)$ be a locally compact, non-complete metric space, and set $\partial X := \overline{X}\setminus X$ be the metric boundary of $X$, where $\overline{X}$ denotes the metric completion of $X$. The space $(X, d_X) $ is called $A$-uniform if for each points $x, y\in X$ there exists a rectifiable curve $\gamma$ joining them satisfying 
\begin{equation}\label{allu_jose_004_001a}
 l(\gamma) \leq A d_X(x, y),
\end{equation}
\begin{equation}\label{allu_jose_004_001b}
\min\left\{l\left(\gamma[x, z]\right),\, l\left(\gamma[z, y]\right)\right\} \leq A \delta_X(z),
\end{equation}
where $\delta_X(z)= d_X(z, \partial X)$.  Here $\gamma[x, z]$ denotes the subcurve of $\gamma$ with endpoints $x, z$.
\end{definition}
A curve satisfying \eqref{allu_jose_004_001a} is called a quasiconvex curve. 
Condition \eqref{allu_jose_004_001b} is referred to  as the double cone condition.
Note that there are definitions of uniform domains that do not require local compactness; see \cite{vaisalaunif} for a detailed discussion of the various formulations.

\subsection{Gromov hyperbolic spaces}
In this paper we only consider proper (closed balls are compact) geodesic metric spaces. 
\begin{definition}\label{allu_jose_004_001c}
Let $(X, d_X)$ be a metric space and $o\in X$ be fixed. For $x, y \in X$, put 
\[
(x|y)_o = \frac{1}{2}\left( d_X(x, o)+ d_X(y, o) - d_X(x, y)\right).
\]
The number $(x|y)_o$ is non-negative by the triangle inequality, and it is called the Gromov product of $x, y$ with respect to $o$. Let $\delta \geq 0$.  We say that $X$ is Gromov $\delta$-hyperbolic or  $\delta$-hyperbolic if 
\[
(x|y)_o \geq \min \left\{(x|z)_o, (z|y)_o\right\} -\delta
\]
for all $x, y, z, o \in X$.
\end{definition}
The above definition of Gromov hyperbolicity holds in arbitrary metric spaces. If the space is geodesic or intrinsic, this is equivalently characterized by the Rips condition (see \cite[Propositions 2.1.2 and 2.1.3]{buyalo} and \cite[Theorems 2.34 and 2.35]{vaisala_2004_gh}).
\vspace{2mm}

Let $X$ be a hyperbolic space and $o\in X$ be fixed. A sequence $\left\{x_n\right\}$ in $X$ is said to be a Gromov sequence if 
\[
\lim_{m, n \rightarrow \infty} (x_m|x_n)_o = \infty.
\]
Two Gromov sequences $\{x_n\}, \{x^{\prime}_n\}$ are equivalent if 
\[
\lim_{m, n \rightarrow \infty} (x_n|x^{\prime}_n)_o = \infty.
\]
The \textit{Gromov boundary} $\partial_\infty X$ of $X$ is defined as the set of all equivalence classes of Gromov sequences. 
\vspace{2mm}

The Gromov product can be extended to the points in the Gromov boundary. For points $\xi, \xi^{\prime} \in \partial_\infty X, x\in X$ and for a fixed base point $o\in X$, the Gromov product is defined as 
\[
(x|\xi)_o  = \inf \left\{\liminf_{n\rightarrow \infty } (x|x_n): \{x_n\} \in \xi \right\}.
\]
Similarly,
\[
(\xi|\xi^{\prime})_o= \inf \left\{\liminf_{n\rightarrow \infty } (x_n|x^{\prime}_n)_o: \{x_n\} \in \xi, \{x^{\prime}_n\} \in \xi^{\prime} \right\}.
\]
We need the following lemma from \cite{vaisala_2004_gh} to prove our main result.
\begin{lemma} \cite[Lemma 5.11]{vaisala_2004_gh}\label{allu_jose_004_002}
Let $X$ be a Gromov $\delta$-hyperbolic space.
Suppose $\{x_n\} \in \xi \in \partial_\infty X, x^{\prime}_n \in \xi^{\prime} \in \partial_{\infty} X$ and $x, o\in X$, then
\begin{align*} 
&(\xi|\xi^{\prime})_o \leq \liminf_{n\rightarrow \infty } (x_n|x^{\prime}_n)_o \leq \limsup_{n\rightarrow \infty } (x_n|x^{\prime}_n)_o \leq (\xi|\xi^{\prime})_o +2\delta,\\
&(\xi|x)_o  \leq \liminf_{n\rightarrow \infty } (x_n|x)_o \leq \limsup_{n\rightarrow \infty } (x_n|x)_o \leq (\xi|x)_o +\delta.
\end{align*}
\end{lemma}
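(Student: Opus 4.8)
The plan is to establish the chain of inequalities from the outside inward, noting that only the two rightmost estimates actually use the hyperbolicity hypothesis. The two leftmost inequalities are immediate from the definitions: since $(\xi|x)_o$ and $(\xi|\xi')_o$ are defined as infima of $\liminf$'s over all admissible representing sequences, and since $\{x_n\}$ represents $\xi$ while $\{x'_n\}$ represents $\xi'$, the defining infima give at once $(\xi|x)_o \le \liminf_{n\to\infty}(x_n|x)_o$ and $(\xi|\xi')_o \le \liminf_{n\to\infty}(x_n|x'_n)_o$. The middle inequalities $\liminf \le \limsup$ require no argument.

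For the rightmost bound on the second line I would fix an arbitrary representative $\{y_m\}\in\xi$ and show $\limsup_{n\to\infty}(x_n|x)_o \le \liminf_{m\to\infty}(y_m|x)_o + \delta$; taking the infimum over all such $\{y_m\}$ then yields the claim (the case $\liminf_m(y_m|x)_o = \infty$ being trivial). The engine is the $\delta$-inequality applied to $y_m, x_n, x$, namely
\[
\min\{(y_m|x_n)_o,\,(x_n|x)_o\} \le (y_m|x)_o + \delta.
\]
Because $\{x_n\}$ and $\{y_m\}$ both represent $\xi$, they are equivalent, so $(y_m|x_n)_o\to\infty$; hence, after fixing $m$ large (both exceeding the equivalence threshold and nearly realizing $\liminf_m(y_m|x)_o$) and then letting $n\to\infty$, the term $(y_m|x_n)_o$ cannot be the minimum, forcing $(x_n|x)_o \le (y_m|x)_o + \delta$ for all large $n$. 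This gives the asserted bound.

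The rightmost bound on the first line is the only place where the additive constant doubles to $2\delta$, and it is the main technical point. Here I would apply the $\delta$-inequality twice to obtain, for arbitrary representatives $\{y_m\}\in\xi$ and $\{y'_m\}\in\xi'$,
\[
(y_m|y'_m)_o \ge \min\{(y_m|x_n)_o,\,(x_n|x'_n)_o,\,(x'_n|y'_m)_o\} - 2\delta.
\]
By equivalence within $\xi$ and within $\xi'$, both $(y_m|x_n)_o$ and $(x'_n|y'_m)_o$ tend to infinity. Fixing $\eta>0$, setting the threshold $T = \liminf_m(y_m|y'_m)_o + 2\delta + \eta$, choosing $m$ large enough that $(y_m|y'_m)_o < \liminf_m(y_m|y'_m)_o + \eta$ and that the outer two products exceed $T$ for all large $n$, one sees that the minimum above must be the diagonal term $(x_n|x'_n)_o$; hence $(x_n|x'_n)_o < T$ for all large $n$. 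Letting $n\to\infty$ and then $\eta\to 0$ gives $\limsup_{n\to\infty}(x_n|x'_n)_o \le \liminf_m(y_m|y'_m)_o + 2\delta$, and taking the infimum over $\{y_m\},\{y'_m\}$ finishes the proof.

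I expect the main obstacle to lie not in any single estimate but in the bookkeeping of the diagonal indices against the $\liminf$/$\limsup$: the auxiliary index $m$ must be chosen simultaneously large enough for the equivalence estimates to take effect and close enough to realize the relevant $\liminf$, and only after $m$ is frozen may one send $n\to\infty$. The explicit threshold $T$ above is what makes this order of quantifiers rigorous.
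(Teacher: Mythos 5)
The paper itself gives no proof of this lemma --- it is imported verbatim from V\"ais\"al\"a [Lemma 5.11] --- so the comparison is with the standard argument of that cited source, and your proof is correct and is essentially that argument: the left inequalities are definitional (the boundary products are infima of exactly the quantities being bounded), and the right inequalities follow from one (respectively two) applications of the $\delta$-inequality combined with a threshold argument that forces the diagonal term $(x_n|x)_o$, respectively $(x_n|x'_n)_o$, to be the minimum; your attention to the quantifier order (freeze $m$, then send $n\to\infty$) is exactly the right bookkeeping. The one step you invoke without justification is that equivalence of $\{x_n\}$ and $\{y_m\}$, which is a \emph{diagonal} condition $\lim_n (x_n|y_n)_o=\infty$, yields divergence of the \emph{mixed} products $(y_m|x_n)_o$ as $m,n\to\infty$ jointly; this is genuinely needed, since for fixed $m$ one only has the a priori bound $(y_m|x_n)_o\le d_X(y_m,o)$, so nothing forces these products to be large for a single frozen $m$ unless the joint statement is known. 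It is, however, a one-line consequence of hyperbolicity, $(y_m|x_n)_o\ge \min\{(y_m|y_n)_o,\,(y_n|x_n)_o\}-\delta\to\infty$ as $m,n\to\infty$, using that $\{y_m\}$ is a Gromov sequence and the diagonal equivalence; with this remark inserted, your proof is complete.
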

We state the definition of rough starlikeness as in \cite{zhou_ponnusamy_rasila_2025}  (see also \cite{vaisala_2004_gh}).
\begin{definition}
A proper geodesic $\delta$-hyperbolic space $(X, d_X)$ is said to be $K$-\textit{roughly starlike}, $K\geq0$, with respect to a base point $\xi \in \partial_\infty X$, if for every point $x\in X$ there exists $\zeta \in \partial_\infty X$ and a geodesic line $\gamma$ joining $\xi$ and $\zeta$ such that  $\mathrm{dist}(x, \gamma) \leq K$.
\end{definition}
Metric trees are Gromov $\delta$-hyperbolic with $\delta =0$ and $K$-roughly starlike with $K=0$. 
The concept of rough starlikeness with respect to a base point in the space was introduced by Bonk, Heinonen, and Koskela \cite[Theorem 3.6]{BHK}. 
They showed that uniform spaces are Gromov hyperbolic when equipped with the quasihyperbolic metric and are roughly starlike with respect to points in the space, provided the space is bounded. 
Recently, Zhou, Ponnusamy, and Rasila  \cite[Lemma 3.14]{zhou_ponnusamy_rasila_2025} established that unbounded uniform spaces are roughly starlike with respect to each point in the Gromov boundary.

\subsection{Busemann functions}\label{allu_jose_004_003aa}
Let $(X, d_X)$ be a $\delta$-hyperbolic space. 
For a fixed base point $o\in X$ and $\xi \in \partial_\infty X$, let $\mathcal{B}(\xi)$ be the set of Busemann functions based at  $\xi \in \partial_\infty X$ as in \cite[Section 3.1]{buyalo}. 
The canonical function $b_{\xi, o}\in \mathcal{B}(\xi) $ is given by
\[
b(x)= b_{\xi, o}(x) = b_\xi(x, o) = (\xi | o)_x - (\xi | x)_o
\]
for every $x\in X$. \vspace{2mm}

As per \cite[Lemma 3.1.1]{buyalo}, we observe that
\begin{equation}\label{allu_jose_004_003a}
b_\xi (x, o) \doteq_{2\delta} (z_i|o)_x - (z_i|x)_o = \left\{ d_X(x, z_i)- d_X(o, z_i)\right\}_i
\end{equation}
for every sequence $(z_i)\in \xi$ and every $x, o\in X$.\vspace{2mm}

Busemann functions are roughly 1-Lipschitz (see \cite[Proposition 3.1.5(1)]{buyalo}), that is, for every $b\in \mathcal{B}(\xi)$ and for every $x, x^\prime\in X$ 
\begin{equation}\label{allu_jose_004_003}
|b(x)-b(x^\prime)|\leq d_X(x, x^\prime)+10\delta.
\end{equation}
Further, by \cite[Lemma 3.1.2]{buyalo}, we have 
\begin{equation}\label{allu_jose_004_004}
b_\xi(x, o) -b_\xi (x, o^\prime) \doteq_{6\delta} b_\xi(o, o^\prime)
\end{equation}
for every $x, o, o^\prime \in X$. 

\subsection{Conformal deformations}

Let $(X, d_X)$ be a proper $\delta$-hyperbolic space. 
Fix a Busemann function $b$ based on $\xi \in \partial_\infty X$, $b=b_{\xi, o}\in \mathcal{B}(\xi)$ with $o\in X$.
For $\epsilon>0$, consider the family of conformal deformations of $X$ generated by the densities 
\begin{equation}\label{allu_jose_004_005}
\rho_\epsilon(x)= \exp \left\{-\epsilon b(z)\right\}.
\end{equation} 
For $x, x^\prime\in X$, let
\[
d_\epsilon(x, x^\prime) = \inf_\gamma \int_\gamma \rho_\epsilon \mathrm{ds},
\]
where the infimum is taken over all rectifiable curves $\gamma$ in $(X, d_X)$ with $x$ and $y$ as the endpoints. 
It is easy to see that $d_\epsilon$ is a metric on $X$.
We denote the resulting metric space by $X_\epsilon= (X, d_\epsilon)$.
For a rectifiable curve $\gamma$ in $X$, the length of $\gamma$ in the space $(X, d_\epsilon)$ is denoted by $l_\epsilon(\gamma)$.
It is known that (see \cite[Lemma 2.6]{BHK}) if the identity map $\left(X, d_X\right) \to \left(X,  l_{d_X}\right)$ is a homeomorphism then for any rectifiable curve $\gamma$ we have
\[
l_\epsilon (\gamma) = \int_\gamma \rho_\epsilon^X ds.
\]
Here $l_{d_X}$ is the inner metric of $X$. 
For $x, x^\prime\in X$, by virtue of \eqref{allu_jose_004_003}, we have the following \textit{Harnack type inequality}:
 
\begin{equation}\label{allu_jose_004_006}
\exp\left\{ -10\epsilon\delta- \epsilon d_X(x, x^\prime)\right\} \leq \frac{\rho_\epsilon(x)}{\rho_\epsilon(x^\prime)} \leq \exp\left\{ 10\epsilon\delta+ \epsilon d_X(x, x^\prime)\right\}.
\end{equation}
From \eqref{allu_jose_004_006}, we deduce that whenever $d_X(x, x^\prime) \leq 1$, we have 
\begin{equation}\label{allu_jose_004_006aa}
\frac{1}{C}\leq\frac{\rho_\epsilon(x)}{\rho_\epsilon(x^\prime)}\leq C,
\end{equation}
with $C= \exp\left\{10\epsilon\delta+\epsilon\right\}$.
\vspace{2mm}

We now state the following lemma from \cite{lindquist_shanmu_2021}.
\begin{lemma}\cite[Lemma 3.1]{lindquist_shanmu_2021}\label{allu_jose_004_006a}
Suppose that $\rho : Y \rightarrow (0, \infty)$ satisfies the Harnack type inequality \eqref{allu_jose_004_006aa} with constant $C$. Let $L>1$ and $\gamma :[0, L] \rightarrow Y$ be a curve with $l(\gamma)=L$. Choose $N\in \mathbb{N}$ such that $N\leq L < N+1$. Then
\begin{equation}\label{allu_jose_004_006b}
\int_\gamma \rho \,\mathrm{ds} \asymp \sum_{i=0}^{N-1} \rho(a_i),
\end{equation}
where $a_i=\gamma(iq)$ with $q= \frac{L}{N}$. The comparison constant in \eqref{allu_jose_004_006b} can be taken to be $2C^2$. If $L\leq Q$ with $Q\geq 1$ a fixed number, we instead have 
\[ 
\int_\gamma \rho \,\mathrm{ds} \asymp L \rho\left(\gamma(0)\right)
\]
with comparison constant $C^{Q+1}$.
\end{lemma}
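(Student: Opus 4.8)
The plan is to use that $\gamma$ is parametrized by arc length, so that for $0\le s\le t\le L$ the subcurve $\gamma[\gamma(s),\gamma(t)]$ has length exactly $t-s$, and in particular $d_Y(\gamma(s),\gamma(t))\le t-s$ since the metric distance is dominated by the length. First I would record the elementary but decisive observation that, because $N\le L<N+1$ and $L>1$ force $N\ge 1$, the step size $q=L/N$ satisfies $1\le q<2$. This is the quantity that controls the whole estimate. I then split the integral along the sample points, $\int_\gamma\rho\,\mathrm{ds}=\sum_{i=0}^{N-1}\int_{\gamma[a_i,a_{i+1}]}\rho\,\mathrm{ds}$, noting that each subcurve $\gamma[a_i,a_{i+1}]$ has length exactly $q$.

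The core of the argument is a pointwise comparison of $\rho$ on each subcurve to its value at the left endpoint $a_i$. Fix $z\in\gamma[a_i,a_{i+1}]$; the arc length from $a_i$ to $z$ is at most $q<2$. Here lies the one point requiring care: a single application of \eqref{allu_jose_004_006aa} needs distance at most $1$, yet $q$ may exceed $1$. I would therefore insert the arc-length midpoint $w$ of $\gamma[a_i,z]$, so that $\gamma[a_i,w]$ and $\gamma[w,z]$ each have length $q/2<1$, whence $d_Y(a_i,w)<1$ and $d_Y(w,z)<1$. Applying \eqref{allu_jose_004_006aa} twice then yields $C^{-2}\rho(a_i)\le\rho(z)\le C^{2}\rho(a_i)$ for every $z$ on the subcurve; the factor $C^2$ rather than $C$ is exactly the price of $q$ being allowed to approach $2$. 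This midpoint step is the only genuinely technical part, and I do not expect it to be an obstacle so much as a bookkeeping point.

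With the pointwise bounds in hand, I would integrate over each subcurve by multiplying the sup/inf of $\rho$ by the length $q$. The upper bound together with $q<2$ gives $\int_{\gamma[a_i,a_{i+1}]}\rho\,\mathrm{ds}\le 2C^{2}\rho(a_i)$, while the lower bound together with $q\ge 1$ gives $\int_{\gamma[a_i,a_{i+1}]}\rho\,\mathrm{ds}\ge C^{-2}\rho(a_i)$. Summing over $i$ produces $C^{-2}\sum_{i=0}^{N-1}\rho(a_i)\le\int_\gamma\rho\,\mathrm{ds}\le 2C^{2}\sum_{i=0}^{N-1}\rho(a_i)$. Since $C^{-2}\ge(2C^{2})^{-1}$, both inequalities are absorbed into the single comparison constant $2C^{2}$, which establishes \eqref{allu_jose_004_006b}.

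For the final assertion, when $L\le Q$ there is no reason to subdivide into steps of size $q$; instead I would compare $\rho(z)$ directly to $\rho(\gamma(0))$ for every $z$ on $\gamma$. Partitioning the arc from $\gamma(0)$ to $z$, which has length at most $L\le Q$, into $k=\lceil L\rceil\le Q+1$ equal pieces, each of length at most $1$, and applying \eqref{allu_jose_004_006aa} successively along the resulting chain yields $C^{-(Q+1)}\rho(\gamma(0))\le\rho(z)\le C^{Q+1}\rho(\gamma(0))$, where I use $C\ge 1$ to replace $\lceil L\rceil$ by $Q+1$. Multiplying these bounds by the length $L$ and integrating gives $\int_\gamma\rho\,\mathrm{ds}\asymp L\,\rho(\gamma(0))$ with comparison constant $C^{Q+1}$, completing the proof.
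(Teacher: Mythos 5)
Your proof is correct, and it recovers exactly the stated comparison constants: the factor $2C^2$ arises precisely from your two observations that $1\le q<2$ forces a two-step Harnack chain (giving $C^2$) and an extra factor of $2$ from the piece length, while the chaining over $\lceil L\rceil\le Q+1$ pieces gives $C^{Q+1}$ in the second case. Note that the paper itself offers no proof of this statement --- it is quoted verbatim from Lindquist--Shanmugalingam \cite[Lemma 3.1]{lindquist_shanmu_2021} --- and your argument is essentially the same discretization-and-chaining argument used there, so there is nothing to flag.
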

\begin{remark}\label{allu_jose_004_007a}
It is straightforward to verify that if $\gamma \colon [0,\infty) \to Y$ is a curve, then the conclusion of Lemma \ref{allu_jose_004_006a} remains valid with $a_i = \gamma(iq)$, where $q\geq 1$, and the sum is taken over all $i \geq 0$.
\end{remark}
\begin{remark}\label{allu_jose_004_007}
Let $X$ be a $\delta$-hyperbolic space and $\phi: Y \rightarrow X$ be a $\lambda$-rough isometry. 
Then it is easy to see that $Y$ is also a $\delta^\prime$-hyperbolic space with constant $\delta^\prime = 3\lambda+\delta$. 
Moreover, if $(X, d_X)$ is a proper Gromov $\delta$-hyperbolic space and $\partial_\infty X$ contains at least two points, then by \cite[Theorem 1.2]{zhou_ponnusamy_rasila_2025} the conformal deformations $X_\epsilon$ induced by the densities \eqref{allu_jose_004_005} are unbounded locally compact uniform spaces for $0<\epsilon \leq \epsilon_0$.
In this paper we do not put this restriction on the parameter, rather we use the technique of discretization of paths. 
\end{remark}
It is worth to note that the authors have developed a Gehring-Hayman type theorem for conformal deformations of Gromov hyperbolic space which are not necessarily geodesic (see \cite[Theorem 1.3]{allu_jose_2}). 
\begin{remark}\label{allu_jose_004_008}
Let $X$  be a hyperbolic space and $\phi : X\rightarrow Y$ be a $\lambda$-rough isometry with $\lambda$-rough isometric inverse. 
Observe that a sequence $(x_n)$ in $X$ is a Gromov sequence if, and only if, $\left(\phi(x_n)\right)$ is a Gromov sequence in $Y$. 
Therefore, there is a one-to-one correspondence between $\partial_\infty X$ and $\partial_\infty Y$ by the map $\phi$. 
Hence, for $\omega\in \partial_\infty X$ we denote the corresponding element in $\partial_\infty Y$ by $\omega^{\prime}$. 
Also, for the Busemann function $b=b_{\omega, o} \in \mathcal{B}(\omega)$, we denote  the corresponding Busemann function as $b^\prime = b_{\omega^\prime\!, o'}\in \mathcal{B}(\omega^\prime)$, where $o^\prime=\phi(o)$.
Building on these notations, we are now concerned about the densities
\[
\rho_\epsilon^X(x) = \exp\left\{-\epsilon b(x)\right\} \text{ and } 
\rho_\epsilon^Y(y) = \exp\left\{-\epsilon b^\prime(y)\right\}
\]
for $x\in X, y\in Y$ and $\epsilon>0$.
\end{remark}

 \section{Key lemmas}\label{allu_jose_004_009aa}
%
%

Let $(X, d_X)$ and $(Y, d_Y)$ be proper metric spaces, and let $\phi:Y \rightarrow X$ be a $\lambda$-rough isometry. As discussed in the introduction, we assume that $\phi^{-1}$ is also a $\lambda$-rough isometry.
\vspace{2mm}

If $(X, d_X)$ is $\delta$-hyperbolic for some $\delta \geq 0$, then by Remark \ref{allu_jose_004_007}, $(Y, d_Y)$ is $(3\lambda+\delta)$-hyperbolic. To simplify our discussion, we may assume for the rest of this paper that both $X$ and $Y$ are proper $\delta$-hyperbolic spaces. \vspace{2mm}

For a point $\omega \in \partial_\infty Y$ and a base point $o \in Y$, let $\omega' = \phi(\omega)$ and $o' = \phi(o)$. 
For any Busemann  function $b = b_{\omega, o} \in \mathcal{B}(\omega)$, we define its image as $b' = \phi(b) = b_{\omega'\!, o'} \in \mathcal{B}(\omega')$ (see Remark \ref{allu_jose_004_008}). 
Unless otherwise indicated, all curves in the following are assumed to be parametrised by arclength.
\vspace{2mm}

First, we establish a lemma that will be useful in the proofs of the forthcoming results.
\begin{lemma}\label{allu_jose_004_009a}
Let $Y$ be a Gromov $\delta$-hyperbolic space and $b\coloneq b_{\omega, o}$ be a Busemann function based on $\omega\in \partial_\infty Y$ and $o\in Y$. Furthermore, $b^\prime = \phi(b)= b_{\omega^\prime\!, o^\prime} \in \mathcal{B}(\omega^\prime)$ be the image of $b$ under $\phi$. 
Then for any $x, z\in Y$ the following holds:
\begin{enumerate}
\item[(a)] \label{allu_jose_004_009b} $b(x) - d_Y\!(x, z)  \leq b(z) \leq b(x) +d_Y\!(x, z),$
\item[(b)] \label{allu_jose_004_009c} $b(x) \doteq_{5\lambda} b^\prime(\phi(x))$.
\end{enumerate}
\end{lemma}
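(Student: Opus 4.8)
The plan is to treat the two parts separately: first establish the exact $1$-Lipschitz estimate (a), then deduce the comparison (b) by pushing a defining sequence of $\omega$ forward through $\phi$. For part (a), I would exploit that the canonical Busemann function is a limit of the functions $g_i(p) = d_Y(p, z_i) - d_Y(o, z_i)$ attached to a representative sequence $(z_i) \in \omega$ (cf. \eqref{allu_jose_004_003a}); in the proper geodesic setting one may equally realise it as $b(p) = \lim_{t \to \infty}\big(d_Y(p, \gamma(t)) - t\big)$ along a geodesic ray $\gamma$ from $o$ to $\omega$. The crucial point is that each such approximating function is \emph{exactly} $1$-Lipschitz: by the reverse triangle inequality $|g_i(x) - g_i(z)| = |d_Y(x, z_i) - d_Y(z, z_i)| \leq d_Y(x, z)$. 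Since a pointwise limit of $1$-Lipschitz functions is again $1$-Lipschitz, $b$ satisfies $|b(x) - b(z)| \leq d_Y(x, z)$, which is precisely the pair of inequalities in (a). This is the sharpening of the rough estimate \eqref{allu_jose_004_003} that discards the additive $10\delta$, and it is the only place where the exact (rather than merely rough) $1$-Lipschitz property is required.

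For part (b), I would fix a representative sequence $(z_i) \in \omega$. By Remark \ref{allu_jose_004_008} the image $(\phi(z_i))$ is a Gromov sequence representing $\omega' = \phi(\omega)$ in $X$. Applying \eqref{allu_jose_004_003a} in $Y$ gives $b(x) \doteq_{2\delta} d_Y(x, z_i) - d_Y(o, z_i)$, while applying it in $X$ to $(\phi(z_i))$ gives $b'(\phi(x)) \doteq_{2\delta} d_X(\phi(x), \phi(z_i)) - d_X(o', \phi(z_i))$, using $o' = \phi(o)$. The rough isometry inequalities \eqref{allu_jose_004_001} then yield $d_X(\phi(x), \phi(z_i)) \doteq_\lambda d_Y(x, z_i)$ and $d_X(o', \phi(z_i)) = d_X(\phi(o), \phi(z_i)) \doteq_\lambda d_Y(o, z_i)$, so the two distance differences agree up to $2\lambda$. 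Chaining these comparisons and passing to the limit along $i$ produces $b(x) \doteq b'(\phi(x))$ with the claimed additive constant.

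The step I expect to be the main obstacle is bookkeeping the additive constant so that it lands at $5\lambda$. The naive chain above produces an error of the form $2\lambda + 4\delta$, so to obtain a purely $\lambda$-dependent bound I anticipate invoking the exact representation underlying part (a): working with the genuine limits $\lim_i(d_Y(x,z_i) - d_Y(o,z_i))$ and $\lim_i(d_X(\phi(x),\phi(z_i)) - d_X(o',\phi(z_i)))$ rather than the $2\delta$-approximate identity \eqref{allu_jose_004_003a}. With exact limits the two $2\delta$ terms drop out and only the two rough-isometry errors survive, comfortably within $5\lambda$. A secondary point requiring care is that the relevant quantities are a priori only liminf/limsup along $(z_i)$ and $(\phi(z_i))$; Lemma \ref{allu_jose_004_002} guarantees these differ from the boundary Gromov products by at most $\delta$ (resp. $2\delta$), which is exactly the slack that must be absorbed to reach a clean comparison.
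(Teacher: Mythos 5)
Your argument for both parts rests on a single premise: that the canonical Busemann function $b_{\omega,o}(x) = (\omega|o)_x - (\omega|x)_o$ is \emph{exactly} a pointwise limit of the functions $g_i = d_Y(\cdot, z_i) - d_Y(o,z_i)$ (equivalently, of $d_Y(\cdot,\gamma(t)) - t$ along a geodesic ray). That premise is false in general, and this is a genuine gap. What \eqref{allu_jose_004_003a} provides is only a $2\delta$-approximate identity; for an arbitrary Gromov sequence $(z_i)\in\omega$ the pointwise limit of $g_i$ need not even exist, and the ray-limit horofunction, which does exist in a proper geodesic space, is a \emph{different} function from the Gromov-product-based $b_{\omega,o}$, agreeing with it only up to an additive error of order $\delta$. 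Consequently your proof of (a) establishes exact $1$-Lipschitzness of the horofunction, which transfers to $b$ itself only as $|b(x)-b(z)| \leq d_Y(x,z)+4\delta$; and in (b) your honest chain produces $2\lambda+4\delta$, which exceeds $5\lambda$ as soon as $\delta > 3\lambda/4$. The proposed remedy (``with exact limits the two $2\delta$ terms drop out'') invokes precisely the exactness that is unavailable, and citing Lemma \ref{allu_jose_004_002} does not rescue it: that lemma is the \emph{source} of the $O(\delta)$ slack, not a device for removing it.

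The way to get constants depending only on $\lambda$ --- and this is what the paper does --- is never to pass through sequence representations of $b$ at all, but to compare the two boundary Gromov products in its definition directly. For (a), the termwise bounds $(w_n|o)_z \doteq (w_n|o)_x$ and $(w_n|z)_o \doteq (w_n|x)_o$ (error $d_Y(x,z)$, from the triangle inequality) hold for every sequence and every $n$, hence pass through the $\inf\liminf$ defining the boundary products with no loss; note that this route, as carried out in the paper, actually delivers $|b(x)-b(z)|\leq 2\,d_Y(x,z)$, i.e.\ a factor $2$, since the two products can move in opposite directions --- so the exact factor-$1$ bound you claim is not obtained even there. For (b), the paper shows (i) sequences representing $\omega'$ may be replaced by sequences of the form $(\phi(z_n))$ at a cost of $\lambda$ in each Gromov product, and (ii) $(\phi(w_n)|o')_{\phi(x)} \doteq_{3\lambda/2} (w_n|o)_x$; since both estimates are uniform over sequences, they pass through the $\inf\liminf$ exactly, giving $(\omega'|o')_{\phi(x)} \doteq_{5\lambda/2} (\omega|o)_x$ and likewise $(\omega'|\phi(x))_{o'} \doteq_{5\lambda/2} (\omega|x)_o$; subtracting the two comparisons yields the stated $5\lambda$ with no $\delta$ appearing anywhere.
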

\begin{proof}
For any sequence $\left(w_n\right) \in \omega$ we have
\begin{align*}
\left(w_n |o\right)_z  &= \frac{1}{2} \left( d_Y\!\left(w_n, z\right) + d_Y\!\left(o, z\right) - d_Y\!\left(w_n, o\right)\right)\\
&\leq \frac{1}{2} \left( d_Y\!\left(w_n, x\right) + d_Y\!\left(x, z\right)+ d_Y\!\left(o, x\right)+ d_Y\!\left(x, z\right) - d_Y\!\left(w_n, o\right)\right)\\
& = \left(w_n |o\right)_x +  d_Y\!\left(x, z\right).
\end{align*}
Similarly, we obtain
\[
\left(w_n |o\right)_z   \geq \left(w_n |o\right)_x -  d_Y\!\left(x, z\right)\!.
\]
Thus we conclude that 
\[ 
\left(w_n |o\right)_x -  d_Y\!\left(x, z\right) \leq  \left(w_n |o\right)_z   \leq \left(w_n |o\right)_x +  d_Y\!\left(x, z\right)\!.
\]
Proceeding analogously, we obtain the following relation:
\[
\left(w_n |x\right)_o - d_Y\!\left(x, z\right)\leq 
\left(w_n |z\right)_o \leq \left(w_n |x\right)_o +d_Y\!\left(x, z\right).
\]
Therefore, we obtain 
\[
\left(\omega |o\right)_x -  d_Y\!\left(x, z\right) \leq  \left(\omega |o\right)_z   \leq \left(\omega |o\right)_x +  d_Y\!\left(x, z\right),
\]
and 
\[
\left(\omega |x\right)_o - d_Y\!\left(x, z\right)\leq 
\left(\omega |z\right)_o \leq \left(\omega |x\right)_o +d_Y\!\left(x, z\right).
\]
Consequently, by the definition of Busemann functions, we have 
\[
b(x) -2\, d_Y\!(x, z) \leq b(z)\leq b(x) +2\, d_Y\!(x, z).
\]
This completes the proof of part (a).\vspace{2mm}

\vspace{2mm}
Recall that 
\[ 
(\omega^\prime|o^\prime)_{x^\prime} = \inf \left\{\liminf_{n \to \infty} (z_n^\prime|o^\prime)_{x^\prime} \colon (z_n^\prime)\in \omega^\prime\right\}\!.
\]
Our first aim is to show that the sequences $(z_n^\prime)\in \omega^\prime$ can equivalently be taken in the form $\phi(z_n)$, where $(z_n)\in \omega$, and that the corresponding Gromov product is comparable to $(\omega^\prime|o^\prime)_{x^\prime}$.
Indeed, let $(z_n)$ be a Gromov sequence in $\omega$. Then $(\phi(z_n))\in \omega^\prime$. Conversely, since $\phi^{-1}$ is a $\lambda$-rough isometry, if $z_n^\prime \in \omega^\prime$ then $z_n = \phi^{-1}(z_n^\prime)\in \omega$.
Note that, although $\phi(z_n)$ may not coincide with $z_n^\prime$, they satisfy
\[
d_X(z_n^\prime, \phi(z_n)) \leq \lambda.
\]
We now claim that
\[
(z_n^\prime|o^\prime)_{x^\prime} \doteq_\lambda (\phi(z_n)|o^\prime)_{x^\prime}.
\]
By the triangle inequality and using $d_X(z_n^\prime, \phi(z_n)) \leq \lambda$, we compute
\begin{align*}
(z_n^\prime|o^\prime)_{x^\prime} 
&= \frac{1}{2} \left( d_X(z_n^\prime, x^\prime) + d_X(o^\prime, x^\prime) - d_X(z_n^\prime, o^\prime)\right) \\
&\doteq_\lambda \tfrac{1}{2} \left( d_X(\phi(z_n), x^\prime) + d_X(o^\prime, x^\prime) - d_X(\phi(z_n), o^\prime)\right) \\
&\doteq_\lambda (\phi(z_n)|o^\prime)_{x^\prime}.
\end{align*}
Therefore, we see that 
\begin{equation}\label{allu_jose_004_009d}
(\omega^\prime|o^\prime)_{x^\prime} \doteq_\lambda \inf \left\{\liminf_{n \to \infty} (\phi(w_n)|o^\prime)_{x^\prime} \colon (w_n)\in \omega\right\}\!.
\end{equation}
Now, applying the $\lambda$-rough isometry property of $\phi$, we obtain
\[
(\phi(w_n)|o^\prime)_{x^\prime} \doteq_{\frac{3\lambda}{2}} (w_n|o)_x.
\]
Substituting this into \eqref{allu_jose_004_009d} gives
\[
(\omega^\prime|o^\prime)_{x^\prime} \doteq_{\frac{5\lambda}{2}} (\omega|o)_x.
\]
Finally, recalling the definition of the Busemann function, we conclude
\[
b(x) = b_{\omega,o}(x) \doteq_{5\lambda} b_{\omega^\prime,o^\prime}(\phi(x)) = b^\prime(\phi(x)),
\]
which is the desired result.
\end{proof}

\begin{lemma}\label{allu_jose_004_009e}
Let $x, y\in Y$ with $d_Y(x, y) >1$, and $\gamma:[0, L] \rightarrow Y$ be a curve from $x$ to $y$. 
Let $N\in \mathbb{N}$ be such that $N\leq L <N+1$. Then the following holds:
\begin{align*}
\int_\gamma \rho_\epsilon^Y \mathrm{ds} &\asymp \sum_{i=0}^{N-1} \rho_\epsilon^X \left(\phi(a_i)\right)\\
& \asymp \sum_{i=0}^{N-2} \rho_\epsilon^X \left(\phi(a_i)\right) +  \rho_\epsilon^X \left(\phi(y)\right)
\end{align*}
where $a_i= \gamma(iq) $ for $0\leq i \leq N$, and  $q= \frac{L}{N}$. 
If $N=1$, we follow the convention that the sum is empty and thus equal to zero.
\end{lemma}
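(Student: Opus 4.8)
The plan is to combine the discretization lemma (Lemma~\ref{allu_jose_004_006a}) with the comparison of Busemann functions from Lemma~\ref{allu_jose_004_009a}(b). First I would observe that $L = l(\gamma) \geq d_Y(x,y) > 1$, so the hypothesis $L>1$ of Lemma~\ref{allu_jose_004_006a} is satisfied, and in particular $N \geq 1$. Since $Y$ is $\delta$-hyperbolic and $b$ is a Busemann function on $Y$, the density $\rho_\epsilon^Y = \exp\{-\epsilon b\}$ satisfies the Harnack type inequality \eqref{allu_jose_004_006aa} with $C = \exp\{10\epsilon\delta + \epsilon\}$, as recorded in \eqref{allu_jose_004_006}. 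Applying Lemma~\ref{allu_jose_004_006a} to $\rho_\epsilon^Y$ along $\gamma$ then gives
\[
\int_\gamma \rho_\epsilon^Y \, \mathrm{ds} \asymp \sum_{i=0}^{N-1} \rho_\epsilon^Y(a_i),
\]
with comparison constant $2C^2$, where $a_i = \gamma(iq)$ and $q = L/N$.

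Next I would pass each summand from $Y$ to $X$. By Lemma~\ref{allu_jose_004_009a}(b) we have $b(a_i) \doteq_{5\lambda} b'(\phi(a_i))$ for every $i$, and since $\rho_\epsilon^Y(a_i) = \exp\{-\epsilon b(a_i)\}$ and $\rho_\epsilon^X(\phi(a_i)) = \exp\{-\epsilon b'(\phi(a_i))\}$, exponentiating yields
\[
\exp\{-5\epsilon\lambda\} \leq \frac{\rho_\epsilon^Y(a_i)}{\rho_\epsilon^X(\phi(a_i))} = \exp\{\epsilon\,(b'(\phi(a_i)) - b(a_i))\} \leq \exp\{5\epsilon\lambda\}.
\]
Thus $\rho_\epsilon^Y(a_i) \asymp \rho_\epsilon^X(\phi(a_i))$ with a constant independent of both $i$ and $N$; summing over $0 \leq i \leq N-1$ and combining with the first step establishes the first comparison, namely $\int_\gamma \rho_\epsilon^Y \, \mathrm{ds} \asymp \sum_{i=0}^{N-1} \rho_\epsilon^X(\phi(a_i))$.

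For the second comparison I would note that the two sums coincide except in their final term: the left-hand sum ends with $\rho_\epsilon^X(\phi(a_{N-1}))$, whereas the right-hand sum replaces it by $\rho_\epsilon^X(\phi(a_N)) = \rho_\epsilon^X(\phi(y))$, using $a_N = \gamma(Nq) = \gamma(L) = y$. From $N \leq L < N+1$ with $N \geq 1$ we get $q = L/N < 1 + \tfrac{1}{N} \leq 2$, so, as $\gamma$ is parametrised by arclength, $d_Y(a_{N-1}, a_N) \leq q < 2$, and the rough isometry bound gives $d_X(\phi(a_{N-1}), \phi(y)) < 2 + \lambda$. The Harnack type inequality \eqref{allu_jose_004_006} for $\rho_\epsilon^X$ then bounds $\rho_\epsilon^X(\phi(a_{N-1}))/\rho_\epsilon^X(\phi(y))$ from above and below by absolute constants, so exchanging these two terms changes each sum by at most a bounded factor; this gives the second comparison. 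In the convention $N=1$ the truncated sum $\sum_{i=0}^{N-2}$ is empty and the assertion reduces to $\rho_\epsilon^X(\phi(x)) \asymp \rho_\epsilon^X(\phi(y))$, which is exactly the same distance estimate applied to $a_0 = x$ and $a_1 = y$.

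These steps are individually short, since the substance is already contained in Lemmas~\ref{allu_jose_004_006a} and~\ref{allu_jose_004_009a}. The point needing the most care is the \emph{uniformity} of every comparison constant in $N$---so that the chained estimates do not deteriorate as $L$ grows---together with the bookkeeping of the endpoint substitution in the second comparison and the degenerate case $N=1$.
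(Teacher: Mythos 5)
Your proof is correct and follows essentially the same route as the paper: discretize via Lemma \ref{allu_jose_004_006a}, compare $\rho_\epsilon^Y(a_i)$ with $\rho_\epsilon^X(\phi(a_i))$ termwise using the rough-isometry invariance of the Busemann function, and then exchange the final term by a Harnack-type estimate. The only differences are cosmetic: you invoke the already-proved Lemma \ref{allu_jose_004_009a}(b) (constant $5\lambda$) where the paper rederives the comparison directly from \eqref{allu_jose_004_003a} (constant $3\lambda+4\delta$), you perform the endpoint swap in $X$ rather than in $Y$, and your bound $d_Y(a_{N-1},y)\leq q<2$ is in fact more careful than the paper's stated $d_Y(a_{N-1},y)\leq 1$, since $q=L/N\geq 1$.
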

\begin{proof}
From Lemma \ref{allu_jose_004_006a}, we have
\[
\int_\gamma \rho_\epsilon^Y \mathrm{ds} \asymp \sum_{i=0}^{N-1} \rho_\epsilon^Y(a_i)
\]
with constant $2C^2$, where $C$ is the constant from \eqref{allu_jose_004_006aa}.
Furthermore, for $(z_i)\in \omega$, $o, y\in Y$, it is easy to see that
\[ 
\left(\phi(z_i)|\phi(o)\right)_{\phi(y)} \doteq_{\frac{3}{2}\lambda} \left(z_i|o\right)_y.
\]
Similarly,
\[
\left(\phi(z_i)|\phi(y)\right)_{\phi(o)} \doteq_{\frac{3}{2}\lambda} \left(z_i|y\right)_o.
\]
Therefore, \eqref{allu_jose_004_003a} gives us
\begin{equation}\label{allu_jose_004_009}
b'\left(\phi(y)\right) \doteq_{3\lambda +4\delta} b(y).
\end{equation}
For $0\leq i\leq N$, we set $b_i = \phi(a_i)$. Then, \eqref{allu_jose_004_009} together with the notations from Remark \ref{allu_jose_004_008}, we obtain that
\[
\exp \left\{-\epsilon\left(4\delta+3\lambda\right)\right\} \leq \frac{\rho_\epsilon^Y(a_i)}{\rho_\epsilon^X(b_i)} \leq \exp \left\{\epsilon\left(4\delta+3\lambda\right)\right\}.
\]
Hence,
\[ 
\sum_{i=0}^{N-1} \rho_\epsilon^Y(a_i) \asymp \sum_{i=0}^{N-1} \rho_\epsilon^X(\phi(a_i))
\]
with constant $\exp\left\{\epsilon\left(4\delta+3\lambda\right)\right\}$.\\
Thus, 
\begin{equation}\label{allu_jose_004_010a}
\int_\gamma \rho_\epsilon^Y \mathrm{ds} \asymp \sum_{i=0}^{N-1} \rho_\epsilon^X(\phi(a_i))
\end{equation}
with constant $2C^2\exp\left\{ \epsilon \left(4\delta+3\lambda\right)\right\}$.
Now, note that $d_Y(a_{N-1}, y) \leq 1$, therefore from \eqref{allu_jose_004_006aa} we obtain that $\rho_\epsilon^Y \left(a_{N-1}\right) \asymp \rho_\epsilon^Y \left(y\right)$ with constant $C$. 
But $\rho_\epsilon^Y \left(y\right)\asymp \rho_\epsilon^X \left(\phi(y)\right)$ with constant $\exp\left\{\epsilon\left(4\delta+3\lambda\right)\right\}$.
 Combining this with \eqref{allu_jose_004_010a} we get the second comparability. This completes the proof.
\end{proof}

\begin{lemma}\label{allu_jose_004_010}
Let $Y$ be a Gromov $\delta$-hyperbolic space and $\epsilon>0$. Then for each $x\in Y$ we have 
\begin{align*}
\delta_\epsilon(x) &\coloneq \mathrm{dist} \left(x, \partial_\epsilon Y\right)\\
&\gtrsim \rho_\epsilon^Y(x). 
\end{align*}
Additionally, if $\partial_\infty Y$ has atleast two points, and $Y$ is $K$-roughly starlike with respect to $\zeta\in \partial_\infty Y\setminus \{\omega\}$, then 
\[
\delta_\epsilon(x) \lesssim \rho_\epsilon^Y(x).
\]
\end{lemma}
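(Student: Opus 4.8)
The plan is to prove the two inequalities separately, since only the upper bound uses rough starlikeness. Throughout write $\rho = \rho_\epsilon^Y = e^{-\epsilon b}$.

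For the lower bound $\delta_\epsilon(x) \gtrsim \rho_\epsilon^Y(x)$ the essential input is that $b$ is genuinely $1$-Lipschitz by Lemma \ref{allu_jose_004_009a}(a), not merely roughly so. Let $\sigma$ be any rectifiable curve from $x$ to a point of $\partial_\epsilon Y$, parametrised by $d_Y$-arclength on $[0,\ell)$. Since $Y$ is proper and hence complete, a curve of finite $d_Y$-length converges to an interior point $p\in Y$, where $\rho$ is positive and continuous and $d_\epsilon$ is therefore locally bi-Lipschitz to $d_Y$; such a curve cannot reach $\partial_\epsilon Y$, so $\ell=\infty$. By Lemma \ref{allu_jose_004_009a}(a), $b(\sigma(t)) \le b(x) + d_Y(x,\sigma(t)) \le b(x)+t$, whence $\rho(\sigma(t)) \ge \rho(x)e^{-\epsilon t}$ and
\[
l_\epsilon(\sigma) = \int_0^\infty \rho(\sigma(t))\,dt \ge \rho(x)\int_0^\infty e^{-\epsilon t}\,dt = \frac{\rho(x)}{\epsilon}.
\]
Taking the infimum over all such $\sigma$ yields $\delta_\epsilon(x)\ge \rho_\epsilon^Y(x)/\epsilon$, which is the first claim.

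For the upper bound I would construct an explicit efficient curve. Using rough starlikeness with respect to $\zeta$, fix a geodesic line $\gamma$ joining $\zeta$ to some $\eta\in\partial_\infty Y$ together with a point $x_0\in\gamma$ satisfying $d_Y(x,x_0)\le K$. Since $x_0$ lies on the geodesic between the two ends, $(\zeta|\eta)_{x_0}\lesssim\delta$, so the hyperbolic inequality together with Lemma \ref{allu_jose_004_002} forces $\min\{(\zeta|\omega)_{x_0},(\eta|\omega)_{x_0}\}\lesssim\delta$. Let $\mu\in\{\zeta,\eta\}$ realise this minimum; then $(\mu|\omega)_{x_0}<\infty$ forces $\mu\neq\omega$. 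Let $r:[0,\infty)\to Y$ be the geodesic ray from $x_0$ to $\mu$. The heart of the matter is to show $b$ grows linearly along $r$, namely $b(r(t)) \ge b(x_0)+t-c\delta$ for an absolute constant $c$. This I would obtain from \eqref{allu_jose_004_003a}, which gives $b(r(t))-b(x_0) \doteq_{c\delta} t - 2(r(t)|\omega)_{x_0}$, combined with the uniform bound $(r(t)|\omega)_{x_0} \le (\mu|\omega)_{x_0}+c'\delta \lesssim \delta$ valid for all $t$ along the ray to $\mu$. Granting this,
\[
l_\epsilon(r) = \int_0^\infty e^{-\epsilon b(r(t))}\,dt \le e^{c\epsilon\delta}\,\rho(x_0)\int_0^\infty e^{-\epsilon t}\,dt = \frac{e^{c\epsilon\delta}}{\epsilon}\,\rho(x_0).
\]
Because $d_Y(x,x_0)\le K$, the Harnack inequality \eqref{allu_jose_004_006aa} gives $\rho(x_0)\asymp\rho(x)$, and prepending the $d_Y$-geodesic from $x$ to $x_0$ (of length $\le K$, hence $\epsilon$-length $\lesssim\rho(x)$ again by \eqref{allu_jose_004_006aa}) yields a curve from $x$ to $\mu$ with $\epsilon$-length $\lesssim\rho_\epsilon^Y(x)$. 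Since $r(t)\to\mu\neq\omega$ leaves every $d_Y$-ball, its endpoint lies in $\partial_\epsilon Y$, so $\delta_\epsilon(x)\lesssim\rho_\epsilon^Y(x)$.

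The main obstacle is precisely the linear-growth estimate for $b$ along $r$: everything hinges on choosing the direction so that $(\mu|\omega)_{x_0}$ is bounded by a multiple of $\delta$, which is exactly what the two-sided geodesic furnished by rough starlikeness makes possible. A one-sided ray could instead first run toward $\omega$, causing $b$ to decrease and the density to grow before it decays, which would destroy the bound. Verifying the uniform estimate $(r(t)|\omega)_{x_0}\lesssim\delta$ for all $t$, rather than only in the limit as $t\to\infty$, is the delicate point and will rely on the near-monotonicity of the Gromov product along a geodesic ray toward its endpoint together with Lemma \ref{allu_jose_004_002}.
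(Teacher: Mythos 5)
Your proposal is correct, and at the level of overall strategy it matches the paper's proof: the lower bound comes from integrating the exponential decay of $\rho_\epsilon^Y$ along any curve tending to $\partial_\epsilon Y$ (you integrate over $[0,\infty)$ using Lemma \ref{allu_jose_004_009a}(a), while the paper integrates only over $[0,1/\epsilon]$ using the Harnack inequality \eqref{allu_jose_004_006} --- only the constants differ), and the upper bound comes from exhibiting a curve of finite $\epsilon$-length built from a short geodesic from $x$ to the line furnished by rough starlikeness, followed by a ray along which $b$ grows linearly. The substantive difference is precisely the step you single out as the heart of the matter. The paper follows the ray toward the starlikeness base point and asserts, in the third line of its displayed estimate, that $d_Y(z,z_i)-d_Y(x,z_i)\geq d_Y\!\left(\eta(s_0),z\right)-d_Y\!\left(x,\eta(s_0)\right)$ up to $O(\delta)$; unwinding this, it amounts to the claim $(z|z_i)_{\eta(s_0)}\lesssim\delta$ for $z$ on that ray and $(z_i)\in\omega$, which in turn requires that the Gromov product at $\eta(s_0)$ of $\omega$ with the chosen end of the line be $\lesssim\delta$. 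Nothing in the hypotheses guarantees this: starlikeness only provides \emph{some} line through $\overline{B}(x,K)$ with one end at the base point, the other end is arbitrary and need not be $\omega$, so $\omega$ may be seen from $\eta(s_0)$ in nearly the same direction as the end being followed; in that case $b$ first \emph{decreases} along the ray by an uncontrolled amount and the resulting integral is not $\lesssim\rho_\epsilon^Y(x)$. Your selection of $\mu\in\{\zeta,\eta\}$ realizing $\min\left\{(\zeta|\omega)_{x_0},(\eta|\omega)_{x_0}\right\}\lesssim\delta$ (legitimate, since $(\zeta|\eta)_{x_0}\lesssim\delta$ for $x_0$ on the line, and finiteness forces $\mu\neq\omega$), together with the uniform bound $(r(t)|\omega)_{x_0}\leq(\mu|\omega)_{x_0}+c'\delta$ for all $t$ --- which does hold, by splitting on which term realizes the minimum in $\min\left\{(r(t)|\mu)_{x_0},(r(t)|\omega)_{x_0}\right\}\leq(\mu|\omega)_{x_0}+c\delta$ and using $(r(t)|\mu)_{x_0}\doteq_\delta t$ and $(r(t)|\omega)_{x_0}\leq t$ --- is exactly what makes the construction sound. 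So your argument is not a mere rewrite: it supplies the direction-selection step that the paper's proof omits, and it works regardless of whether the starlikeness base point is $\zeta\neq\omega$ (as in the statement of the lemma) or $\omega$ itself (as actually assumed when the lemma is invoked in Theorem \ref{allu_jose_004_024}).
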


\begin{proof}
Let $x \in Y$ be fixed, and set $r = \tfrac{1}{\epsilon}$. For every point $y \in Y$ with $d_Y(x,y) \geq r$, and for every rectifiable curve $\gamma \colon [0,L] \to Y$ joining $x = \gamma(0)$ to $y = \gamma(L)$, the restriction $\gamma|_{[0,r]}$ is a subcurve of length $r$ starting at $x$.\\
Then,

\begin{align*}
d_\epsilon(x, y) &\geq \int_{0}^r \rho_\epsilon^Y\left(\gamma(s)\right) \mathrm{ds}\\
&\geq \int_0^{1/\epsilon} \exp\left\{ -10\epsilon\delta\right\} \rho_\epsilon^Y(x) e^{-\epsilon s} \mathrm{ds}\\
&= \exp\left\{-10\epsilon\delta\right\}\frac{\left(1-e^{-1}\right)}{\epsilon} \rho_\epsilon^Y(x).
\end{align*}
Thus, we have
\[
\delta_\epsilon(x) \geq \frac{\exp\left\{ -10\epsilon\delta\right\}}{2\epsilon} \rho_\epsilon^Y (x).
\]
\vspace{2mm}
Further, suppose that $Y$ is roughly $K$-starlike with respect to $\xi \in \partial_\infty Y$, where $\xi \neq \omega$. 
Then there exists a geodesic line $\eta \colon \mathbb{R} \to Y$ such that 
\[
\eta(-\infty) = \zeta, \quad \eta(\infty) = \xi, \quad \text{and} \quad d_Y\! \left(x, \eta(t_0)\right) \leq K.
\]
Now, from \eqref{allu_jose_004_004} and \eqref{allu_jose_004_003a} for any $\left(z_n\right) \in \omega$ we see that
\begin{align*}
b(z)-b(x) &\geq b_\omega \left(z, \eta\left(s_0\right)\right)-  b_\omega \left(x,\eta\left(s_0\right)\right) -12\delta\\
&\geq \left\{ d_Y\!\left(z, z_i\right) - d_Y\!\left(\eta(s_0), z_i\right)\right\} - 
\left\{ d_Y\!\left(x, z_i\right) - d_Y\!\left(\eta(s_0), z_i\right)\right\} -16\delta\\
&\geq d_Y\!\left(\eta(s_0), z\right) - d_Y\!\left(x, \eta(s_0)\right) -16\delta\\
&\geq d_Y\!\left(\eta(s_0), z\right) - K -16\delta.
\end{align*}
It follows that 
\begin{equation}\label{allu_jose_004_011}
\rho_\epsilon^Y (z) \leq \rho_\epsilon^Y(x) \exp\left\{\epsilon\left(K + 16\delta\right)\right\} \exp\left\{ -\epsilon d_Y\!\left(\eta\left(s_0\right), z\right)\right\}\!.
\end{equation}
Next, let $\gamma$ be a hyperbolic geodesic connecting $x$ and $\eta (t_0)$. 
Set $\gamma^*$ as the concatenation of $\gamma$ and $\eta|_{[s_0, \infty]}$.
\vspace{2mm}

Then we have
\begin{align*}
\delta_\epsilon(x) &\leq \int_{\gamma^*} \rho_\epsilon^Y(z) \mathrm{|dz|}\\
&= \int_\gamma \rho_\epsilon^Y(z) \mathrm{|dz| } + \int_{\eta|_{[s_{0}, \infty]}} \rho_\epsilon^Y \left(z\right)\mathrm{|dz|}. 
\end{align*}
Note that for any $z\in\gamma $, we have $d_Y (x, z) \leq d_Y (x, \eta(s_0)) \leq K$. Combining this with Lemma \ref{allu_jose_004_009a}, we see that
\begin{equation}\label{allu_jose_004_012}
\int_\gamma \rho_\epsilon^Y(z) \mathrm{|dz| }  \leq \int_\gamma \rho_\epsilon^Y (x) e^{\epsilon K} \mathrm{|dz| } = K\rho_\epsilon^Y (x) e^{2K\epsilon  }.
\end{equation}
Subsequently, from \eqref{allu_jose_004_011} it follows that

\begin{align}
 \int_{\eta|_{[s_{0}, \infty]}} \rho_\epsilon^Y \left(z\right)\mathrm{|dz|} &\leq \rho_\epsilon^Y(x) \exp\left\{\epsilon\left(K + 16\delta\right)\right\} \int_0^\infty e^{-\epsilon t} \mathrm{dt} \nonumber\\
 &=  \epsilon^{-1}\rho_\epsilon^Y(x) \exp\left\{\epsilon\left(K + 16\delta\right)\right\}\label{allu_jose_004_013}.
\end{align}
Therefore, we conclude that 
$
\delta_\epsilon(x) \lesssim \rho_\epsilon^Y(x).
$\end{proof}
\begin{lemma}\label{allu_jose_004_014}
Suppose $X$ and $Y$ are Gromov $\delta$-hyperbolic spaces, and let $\phi\colon Y \to X$ be a $\lambda$-rough isometry. 
Then, for each $\epsilon > 0$ and every $y \in Y$, 
\[
\delta_\epsilon(y) \asymp \delta_\epsilon\left(\phi(y)\right).
\]
\end{lemma}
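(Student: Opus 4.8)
The plan is to prove the two comparisons $\delta_\epsilon(\phi(y)) \lesssim \delta_\epsilon(y)$ and $\delta_\epsilon(y) \lesssim \delta_\epsilon(\phi(y))$ separately, deducing the second from the first by applying it to the rough isometry $\phi^{-1}\colon X \to Y$. The starting observation is that, since $Y_\epsilon$ is a length space, $\delta_\epsilon(y)$ equals the infimum of $l_\epsilon^Y(\gamma)$ over all curves $\gamma$ issuing from $y$ and terminating at a point of $\partial_\epsilon Y$. I would fix a small $\eta>0$ and choose such a curve $\gamma$ with $l_\epsilon^Y(\gamma) \leq \delta_\epsilon(y)+\eta$. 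Because $Y$ is proper and $\rho_\epsilon^Y$ is bounded below on every closed (hence compact) $d_Y$-ball, a curve of finite $\epsilon$-length converging to a boundary point of $Y_\epsilon$ must leave every $d_Y$-bounded set; in particular $\gamma$ has infinite $d_Y$-length, so after reparametrising by $d_Y$-arclength we may regard it as $\gamma\colon [0,\infty)\to Y$.

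Next I would transport the curve to $X$ by discretisation. Set $a_i=\gamma(i)$ and $b_i=\phi(a_i)$ for $i\geq 0$, so that $b_0=\phi(y)$, and join consecutive points $b_i,b_{i+1}$ by a geodesic $\sigma_i$ in $X$; since $d_X(b_i,b_{i+1})\leq d_Y(a_i,a_{i+1})+\lambda \leq 1+\lambda$, each $\sigma_i$ has uniformly bounded $d_X$-diameter. Let $\gamma'$ be the concatenation of the $\sigma_i$, a curve in $X$ starting at $\phi(y)$. By the Harnack estimate (Lemma \ref{allu_jose_004_006a} with $Q=1+\lambda$) each segment satisfies $l_\epsilon^X(\sigma_i)\lesssim \rho_\epsilon^X(b_i)$, while the density comparison \eqref{allu_jose_004_009} (equivalently Lemma \ref{allu_jose_004_009a}(b)) gives $\rho_\epsilon^X(b_i)\asymp \rho_\epsilon^Y(a_i)$; summing and invoking the infinite-curve form of the Harnack sum (Remark \ref{allu_jose_004_007a} applied to $\gamma$ with $q=1$) yields
\[
l_\epsilon^X(\gamma') \lesssim \sum_{i\geq 0}\rho_\epsilon^X(b_i) \asymp \sum_{i\geq 0}\rho_\epsilon^Y(a_i) \asymp l_\epsilon^Y(\gamma) \leq \delta_\epsilon(y)+\eta,
\]
with constants depending only on $\epsilon,\delta,\lambda$.

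It remains to confirm that $\gamma'$ actually reaches $\partial_\epsilon X$, so that its $\epsilon$-length bounds $\delta_\epsilon(\phi(y))$ from above. Since $\gamma$ leaves every $d_Y$-bounded set and $\phi$ is a $\lambda$-rough isometry, the points $b_i=\phi(a_i)$ satisfy $d_X(b_i,o')\geq d_Y(a_i,o)-\lambda\to\infty$, so $\gamma'$ leaves every compact subset of $X$; being of finite $\epsilon$-length it converges in the completion $\overline{X}_\epsilon$ to a point that cannot lie in $X$, hence to a point of $\partial_\epsilon X$. Therefore $\delta_\epsilon(\phi(y))\leq l_\epsilon^X(\gamma')\lesssim \delta_\epsilon(y)+\eta$, and letting $\eta\to 0$ gives $\delta_\epsilon(\phi(y))\lesssim \delta_\epsilon(y)$. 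I expect this escaping argument — verifying that the discretised image curve genuinely limits onto the boundary rather than collapsing to an interior point — together with the uniform length control of the interpolating geodesics, to be the main technical point.

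For the reverse inequality I would apply the inequality just established to the $\lambda$-rough isometry $\phi^{-1}\colon X\to Y$ at the point $\phi(y)$, obtaining $\delta_\epsilon(\phi^{-1}(\phi(y)))\lesssim \delta_\epsilon(\phi(y))$. Finally, since $d_Y(y,\phi^{-1}(\phi(y)))\leq 2\lambda$ by \eqref{allu_jose_004_001aa}, the quantities $\delta_\epsilon(y)$ and $\delta_\epsilon(\phi^{-1}(\phi(y)))$ are comparable: for points $y,y'$ with $d_Y(y,y')\leq 2\lambda$ one has $\delta_\epsilon(y')\leq \delta_\epsilon(y)+d_\epsilon(y,y')$, and $d_\epsilon(y,y')\lesssim \rho_\epsilon^Y(y)\lesssim \delta_\epsilon(y)$ by the Harnack inequality \eqref{allu_jose_004_006aa} and the lower bound of Lemma \ref{allu_jose_004_010}, whence $\delta_\epsilon(y)\asymp \delta_\epsilon(\phi^{-1}(\phi(y)))$. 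Combining the two directions yields $\delta_\epsilon(y)\asymp \delta_\epsilon(\phi(y))$.
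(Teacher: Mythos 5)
Your proof is correct and follows essentially the same route as the paper: discretise a near-boundary (escaping) curve in $Y$, transport it to $X$ via $\phi$ with geodesic interpolation, control the length by the Harnack summation of Lemma \ref{allu_jose_004_006a} and Remark \ref{allu_jose_004_007a} together with the density comparison of Lemma \ref{allu_jose_004_009a}(b), and obtain the reverse bound by applying the same estimate to $\phi^{-1}$ and absorbing the displacement $d_\epsilon\left(y,\phi^{-1}(\phi(y))\right)\lesssim\rho_\epsilon^Y \lesssim \delta_\epsilon$ via Lemma \ref{allu_jose_004_010}. The only differences are cosmetic (an $\eta$-optimal curve instead of an infimum over all escaping curves, spacing $1$ instead of $1+\lambda$, and a more explicit verification that the transported curve converges to a point of $\partial_\epsilon X$).
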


\begin{proof}
Let $y\in Y$ and $x=\phi(y)$. Let $\gamma \colon [0, \infty) \rightarrow Y$ be any path emanating from $y$ that leaves every compact subset of $Y$.
Define $a_0=y$, and for each $k\in \mathbb{N}$, set $a_k \coloneq \gamma\left(\left(1+\lambda\right)k\right)$.
Then by Lemma \ref{allu_jose_004_006a}, Remark \ref{allu_jose_004_007a} and from the fact that $\phi$ is rough isometry, we have 
\[ 
l_\epsilon(\gamma) \asymp \sum_{k=0}^\infty \rho_\epsilon^Y(a_k) \asymp \sum_{k=0}^\infty \rho_\epsilon^X\left(\phi\left(a_k\right)\right).
\]
Let $\beta_k$ be a geodesic joining $\phi(a_k) $ and $\phi(a_{k+1})$ and $\beta $ be the concatenation of $\beta_k$ for $k=0, 1, 2, \cdots$. 
It is easy to see that $d_Y\!\left(a_k, a_{k+1}\right) \leq 1+\lambda$ and thus $d_X\left(\phi\left(a_k\right), \phi\left(a_{k+1}\right)\right) \leq 1+2\lambda$.
Hence, from the second part of Lemma \ref{allu_jose_004_006a}, we deduce that
\[ 
l_\epsilon(\beta) =\sum_{k=0}^\infty l_\epsilon\left(\beta_k\right) \asymp \sum_{k=0}^\infty  \rho_\epsilon^X\left(\phi\left(a_k\right)\right)\asymp l_\epsilon(\gamma).
\]

Note that since $\gamma$ leaves every compact subset of $Y$ and $\phi$ is a rough isometry, it follows that $\beta$ leaves every compact subset of $X$. 
Consequently, using the fact that $X_\epsilon$ is an intrinsic (length) space, we obtain 
\[
\delta_\epsilon\left(\phi(y)\right) \leq l(\beta),
\]
for any such curve $\beta$.  
Since $Y_\epsilon$ is also an intrinsic space, taking the infimum over all curves $\gamma \colon [0,\infty) \to Y$ that leave every compact subset of $Y$ with $\gamma(0)=y$, and combining with the above inequality, we deduce that
\[
\delta_\epsilon\left(\phi(y)\right) \lesssim \delta_\epsilon(y).
\]
To obtain the reverse inequality, we interchange the roles of $X$ and $Y$ in the above argument and use $\phi^{-1}$ in place of $\phi$.  This  yields
\[
\delta_\epsilon\left(\phi^{-1}\left(\phi(y)\right)\right) \lesssim \delta_\epsilon(\phi(y)).
\]
Observe that from the definition of $\phi^{-1}$, we always have 
\[
d_Y\!\left(y, \phi^{-1}\left(\phi(y)\right)\right) \leq \lambda.
\]
Thus, we have
\begin{align}
\delta_\epsilon(y) &\leq \delta_\epsilon\left(\phi^{-1}\left(\phi(y)\right)\right) + d_\epsilon \left(y, \phi^{-1}\left(\phi(y)\right)\right)\nonumber\\
& \lesssim \delta_\epsilon\left(\phi^{-1}\left(\phi(y)\right)\right) + \tau \rho_\epsilon^Y\left(\phi^{-1}\left(\phi(y)\right)\right) \nonumber\\
&\lesssim \delta_\epsilon\left(\phi^{-1}\left(\phi(y)\right)\right)\label{allu_jose_004_015a},
\end{align}
where we have used Lemma \ref{allu_jose_004_010} and second part of Lemma \ref{allu_jose_004_006a}. This completes the proof.
\end{proof}
\begin{lemma}\label{allu_jose_004_015}
Let $x, y \in Y$ satisfy $d_Y(x, y) \leq 4+\lambda$, and let $\gamma$ be a hyperbolic geodesic in $Y$ joining $x$ and $y$. Then
\[
l_\epsilon(\gamma) \asymp d_\epsilon(x, y) \asymp \rho_\epsilon^Y(x)d_Y(x, y).
\]
Moreover, $\gamma$ is a uniform curve in $Y_\epsilon$, with uniformity constant depending on $\epsilon$, $\lambda$, and $\delta$.
\end{lemma}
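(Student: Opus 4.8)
The plan is to establish the two comparabilities first and then read off the uniformity conditions from Definition \ref{allu_jose_004_001ab}; we may clearly assume $x\neq y$. Since $\gamma$ is a hyperbolic geodesic joining $x$ and $y$, its $Y$-length equals $L \coloneq d_Y(x,y) \leq 4+\lambda$. Taking $Q = 4+\lambda \geq 1$ and applying the second part of Lemma \ref{allu_jose_004_006a} to the density $\rho_\epsilon^Y$, which satisfies the Harnack inequality \eqref{allu_jose_004_006aa}, along $\gamma$ parametrised by arclength, I would obtain
\[
l_\epsilon(\gamma) = \int_\gamma \rho_\epsilon^Y \, \mathrm{ds} \asymp L\, \rho_\epsilon^Y(x) = \rho_\epsilon^Y(x)\, d_Y(x,y),
\]
with comparison constant $C^{Q+1}$. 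Because $\gamma$ joins $x$ and $y$, this immediately yields the upper bound $d_\epsilon(x,y) \leq l_\epsilon(\gamma) \lesssim \rho_\epsilon^Y(x)\, d_Y(x,y)$.

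The main obstacle is the matching lower bound $d_\epsilon(x,y) \gtrsim \rho_\epsilon^Y(x)\, d_Y(x,y)$, since a priori a competitor curve could dip into a region where $\rho_\epsilon^Y$ is small and thereby achieve a short $d_\epsilon$-length. I would handle this by a dichotomy. Let $\sigma$ be any rectifiable curve joining $x$ and $y$, and set $R = 2(4+\lambda)$. If $\sigma$ stays inside $B(x,R)$, then by the Harnack inequality \eqref{allu_jose_004_006} every point $z$ on $\sigma$ satisfies $\rho_\epsilon^Y(z) \geq \rho_\epsilon^Y(x)\exp\{-10\epsilon\delta - \epsilon R\}$, so that
\[
\int_\sigma \rho_\epsilon^Y \, \mathrm{ds} \gtrsim \rho_\epsilon^Y(x)\, l(\sigma) \geq \rho_\epsilon^Y(x)\, d_Y(x,y).
\]
If instead $\sigma$ leaves $B(x,R)$, I would restrict to the initial subcurve from $x$ to the first exit point of $\partial B(x,R)$, which has $Y$-length at least $R \geq d_Y(x,y)$ while still lying in $\overline{B}(x,R)$; the same Harnack estimate then gives $\int_\sigma \rho_\epsilon^Y \, \mathrm{ds} \gtrsim \rho_\epsilon^Y(x)\, R \geq \rho_\epsilon^Y(x)\, d_Y(x,y)$. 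Taking the infimum over all such $\sigma$ gives the lower bound, and combining with the previous paragraph yields the full chain $l_\epsilon(\gamma) \asymp d_\epsilon(x,y) \asymp \rho_\epsilon^Y(x)\, d_Y(x,y)$.

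For the uniformity of $\gamma$ in $Y_\epsilon$, the quasiconvexity bound \eqref{allu_jose_004_001a} is immediate from $l_\epsilon(\gamma) \asymp d_\epsilon(x,y)$. For the double cone condition \eqref{allu_jose_004_001b}, fix a point $z$ on $\gamma$. Each subcurve $\gamma[x,z]$ is again a hyperbolic geodesic with $d_Y(x,z) \leq l(\gamma) = d_Y(x,y) \leq 4+\lambda$, so the first comparability applied to $\gamma[x,z]$ (parametrised from $z$) gives $l_\epsilon(\gamma[x,z]) \asymp \rho_\epsilon^Y(z)\, d_Y(x,z) \lesssim \rho_\epsilon^Y(z)$, and likewise $l_\epsilon(\gamma[z,y]) \lesssim \rho_\epsilon^Y(z)$. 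Finally, the unconditional lower bound $\delta_\epsilon(z) \gtrsim \rho_\epsilon^Y(z)$ of Lemma \ref{allu_jose_004_010} gives
\[
\min\{l_\epsilon(\gamma[x,z]),\, l_\epsilon(\gamma[z,y])\} \lesssim \rho_\epsilon^Y(z) \lesssim \delta_\epsilon(z),
\]
which is exactly \eqref{allu_jose_004_001b}. The uniformity constant produced in this way is controlled by $C$ and $C^{Q+1}$ together with the implicit constant from Lemma \ref{allu_jose_004_010}, and hence depends only on $\epsilon$, $\lambda$, and $\delta$, as claimed.
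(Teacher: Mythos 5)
Your proof is correct and follows essentially the same route as the paper's: both establish $l_\epsilon(\gamma) \asymp \rho_\epsilon^Y(x)\, d_Y(x,y)$ via pointwise comparability of the density along short curves, bound $d_\epsilon(x,y)$ from below by restricting any competitor curve to an initial piece of length comparable to $d_Y(x,y)$ on which that comparability still holds, and derive the double cone condition from the bound $\delta_\epsilon(z) \gtrsim \rho_\epsilon^Y(z)$ of Lemma \ref{allu_jose_004_010}. The only cosmetic differences are that the paper uses the Busemann estimate of Lemma \ref{allu_jose_004_009a}(a) directly instead of citing the second part of Lemma \ref{allu_jose_004_006a}, and it truncates each competitor at arclength exactly $d_Y(x,y)$ rather than running your ball-exit dichotomy.
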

\begin{proof}
The length of $\gamma$ in the uniformized space $Y_\epsilon$ is given by 
\[
l_\epsilon(\gamma) = \int_\gamma \exp\{\epsilon b(z)\}\, |dz|.
\]
Furthermore, since $d_Y(x, y) \leq 4+\lambda$, by Lemma~\ref{allu_jose_004_009a} we have, for any $z \in \gamma$,
\begin{equation}\label{allu_jose_004_016}
b(z) \doteq_{2\lambda + 8} b(x).
\end{equation}
Consequently, we obtain 
\begin{equation}\label{allu_jose_004_017}
\exp\{-2\lambda\epsilon-8\epsilon\} e^{-\epsilon b(x)}\, l(\gamma) \leq l_\epsilon(\gamma) \leq \exp\{2\lambda\epsilon+8\epsilon\} e^{-\epsilon b(x)}\, l(\gamma).
\end{equation}

On the other hand, for any rectifiable curve $\beta\colon [0, L]\rightarrow Y $, where $L=l(\beta)$, with endpoints $x, y$, we have $l(\beta) \geq d_Y(x, y)$.
Choose $t_0\in [0, L]$ such that 
\[
l(\beta|_{[0, t_0]}) = d_Y(x, y).
\]
Therefore,
\[
\int_\beta  \rho_\epsilon^Y \mathrm{ds} \geq \int_0^{t_0} \rho_\epsilon^Y\! \left(\beta(t)\right) \mathrm{dt}.
\]
Observe that for any $z\in \beta|_{[0, t_0]}$, we have
\[
d_Y(x, z) \leq l(\beta|_{[0, t_0]}) \leq 4+\lambda.
\]
Again by Lemma \ref{allu_jose_004_009a}, we retain the relation \eqref{allu_jose_004_016} for all $z\in \beta|_{[0, t_0]}$.
\vspace{2mm}

Thus, a simple computation shows that
\[
\int_\beta  \rho_\epsilon^Y \mathrm{ds} \geq  d_Y(x, y) e^{-\epsilon b(x)} \exp\{ -2\lambda\epsilon -8\epsilon\},
\]
thereby taking the infimum over all curves $\beta$ joining $x$ and $y$ we obtain \[
d_\epsilon (x, y)  \geq d_Y(x, y) e^{-\epsilon b(x)} \exp\{ -2\lambda\epsilon -8\epsilon\}.
\]
Note that in \eqref{allu_jose_004_017}, $l(\gamma)= d_Y(x, y)$.\vspace{2mm}
 
Therefore, we have
\begin{align*}
d_Y(x, y) e^{-\epsilon b(x)} \exp\{2\lambda\epsilon+8\epsilon\} 
&\geq l_\epsilon(\gamma) \geq d_\epsilon(x, y) \\
&\geq d_Y(x, y) e^{-\epsilon b(x)} \exp\{-2\lambda\epsilon -8\epsilon\}.
\end{align*}
Thus, we conclude that
\begin{equation}\label{allu_jose_004_018}
d_\epsilon(x, y) \asymp e^{-\epsilon b(x)} d_Y(x, y).
\end{equation}
Moreover, from \eqref{allu_jose_004_017} and \eqref{allu_jose_004_018} we observe that
\[ l_\epsilon(\gamma) \lesssim d_\epsilon(x, y).
\]
That is, $\gamma$ is a quasiconvex curve in $Y_\epsilon $ with constants depending on $\epsilon$ and $\lambda$.\vspace{2mm}

By Lemma \ref{allu_jose_004_010}, we have
\[
\delta_\epsilon(z) \gtrsim \rho_\epsilon(z).
\]
But by \eqref{allu_jose_004_016} and since $d_Y(x, y)\leq 4+\lambda$, we have 
\[
\rho_\epsilon(z) \gtrsim \rho_\epsilon(x) \gtrsim l_\epsilon(\gamma).
\]
Hence, combining the above two relations we obtain 
\[ 
\delta_\epsilon(z)\gtrsim l_\epsilon(\gamma),
\]
which shows that $\gamma$ is a double cone curve in $Y_\epsilon$ with constant depending on $\epsilon,$ and  $\lambda$. Thus, $\gamma$ is a uniform curve. this completes the proof.
\end{proof}
\begin{lemma}\label{allu_jose_004_019}
Let $x, y\in Y$ be such that $d_Y(x, y) \geq 2+\lambda$. Then
\[
d_\epsilon(x, y) \asymp d_\epsilon\left(\phi(x), \phi(y)\right).
\]
\end{lemma}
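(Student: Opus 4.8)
The plan is to prove the two-sided comparison by producing, for a near-optimal curve in one space, a comparison curve in the other whose conformal length is controlled, using the discretization machinery of Lemma \ref{allu_jose_004_009e} together with the density comparison $\rho_\epsilon^Y(\cdot) \asymp \rho_\epsilon^X(\phi(\cdot))$ coming from Lemma \ref{allu_jose_004_009a}(b). The role of the hypothesis $d_Y(x,y) \geq 2+\lambda$ is to guarantee that the images are far apart in $X$: indeed $d_X(\phi(x),\phi(y)) \geq d_Y(x,y) - \lambda \geq 2 > 1$, so that the discretization lemma is applicable in both $X$ and $Y$.

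First I would establish $d_\epsilon(\phi(x),\phi(y)) \lesssim d_\epsilon(x,y)$. Fix a rectifiable curve $\gamma \colon [0,L]\to Y$ from $x$ to $y$ and discretize it as in Lemma \ref{allu_jose_004_009e}, setting $a_i = \gamma(iq)$ with $q = L/N \in [1,2)$ and $a_0 = x$, $a_N = y$. Since $d_Y(a_i,a_{i+1}) \leq q < 2$, the images satisfy $d_X(\phi(a_i),\phi(a_{i+1})) \leq 2+\lambda$. Joining consecutive images $\phi(a_i)$ by geodesics $\beta_i$ in $X$ and concatenating gives a curve $\beta$ from $\phi(x)=\phi(a_0)$ to $\phi(y)=\phi(a_N)$, so that no endpoint correction is needed here. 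The second part of Lemma \ref{allu_jose_004_006a} (applied with $Q = 2+\lambda$) bounds each piece by $l_\epsilon(\beta_i) \lesssim \rho_\epsilon^X(\phi(a_i))$, whence
\[
d_\epsilon(\phi(x),\phi(y)) \leq l_\epsilon(\beta) \lesssim \sum_{i=0}^{N-1}\rho_\epsilon^X(\phi(a_i)) \asymp \int_\gamma \rho_\epsilon^Y\,\mathrm{ds} = l_\epsilon(\gamma),
\]
where the last comparison is exactly Lemma \ref{allu_jose_004_009e}. Taking the infimum over all $\gamma$ joining $x$ and $y$ yields the bound, with comparison constant depending only on $\epsilon$, $\delta$, and $\lambda$.

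For the reverse inequality I would run the symmetric argument with the roles of $X$ and $Y$ interchanged and $\phi^{-1}$ in place of $\phi$; this is legitimate because $\phi^{-1}$ is also a $\lambda$-rough isometry and the boundary/Busemann correspondence is symmetric, so the analogues of Lemmas \ref{allu_jose_004_009a} and \ref{allu_jose_004_009e} hold. Starting from a near-optimal curve $\sigma$ in $X$ from $\phi(x)$ to $\phi(y)$, which is discretizable since $d_X(\phi(x),\phi(y)) \geq 2 > 1$, I map the discretization points back by $\phi^{-1}$ and join them by $Y$-geodesics to obtain a curve $\alpha$ from $\phi^{-1}(\phi(x))$ to $\phi^{-1}(\phi(y))$ with $l_\epsilon(\alpha) \lesssim \int_\sigma \rho_\epsilon^X\,\mathrm{ds}$. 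The one genuine wrinkle, and the main obstacle, is that $\phi^{-1}\circ\phi$ is not the identity, so I must reconnect $x$ to $\phi^{-1}(\phi(x))$ and $y$ to $\phi^{-1}(\phi(y))$. Since $d_Y(x,\phi^{-1}(\phi(x))) \leq 2\lambda$ by \eqref{allu_jose_004_001aa}, the second part of Lemma \ref{allu_jose_004_006a} shows these short geodesics have conformal length $\lesssim \rho_\epsilon^Y(x)$ and $\lesssim \rho_\epsilon^Y(y)$ respectively, and both quantities are $\lesssim d_\epsilon(\phi(x),\phi(y))$ because, after applying the Harnack inequality \eqref{allu_jose_004_006aa}, they appear up to uniform comparison as terms of the discretized sum. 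Concatenating the three pieces gives $d_\epsilon(x,y) \lesssim d_\epsilon(\phi(x),\phi(y))$, which completes the proof.
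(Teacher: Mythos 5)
Your proposal is correct and follows essentially the same route as the paper's own proof: discretize a curve via Lemma \ref{allu_jose_004_009e}, transport the discretization points by $\phi$ (resp.\ $\phi^{-1}$) and join them by short geodesics controlled by Lemma \ref{allu_jose_004_015} (equivalently the second part of Lemma \ref{allu_jose_004_006a}), and handle the $\phi^{-1}\circ\phi$ endpoint discrepancy by noting that $\rho_\epsilon^Y(x)+\rho_\epsilon^Y(y) \lesssim d_\epsilon\left(\phi(x),\phi(y)\right)$ via the discretized-sum lower bound together with the density comparison of Lemma \ref{allu_jose_004_009a}(b). The only cosmetic difference is that you build explicit geodesic concatenations where the paper chains triangle inequalities for $d_\epsilon$, which amounts to the same estimates.
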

\begin{proof}
Let $\gamma \colon [0,L] \to Y$ be a curve joining $x$ to $y$ with length $L\geq 2+\lambda\geq 2$. 
Fix $N\in \mathbb{N}$ satisfying $N\leq L\leq N+1$ and set $q=\frac{L}{N}$. 
For $0\leq i \leq N$, define 
\[
a_i= \gamma(iq) \quad \text{and} \quad b_i=\phi(a_i).
\]
Then we have
\[
d_X(b_i, b_{i+1}) \leq d_Y(a_i, a_{i+1}) +\lambda \leq 4+\lambda.
\]
Hence, by Lemma \ref{allu_jose_004_015}, we obtain
\[
d_\epsilon(b_i, b_{i+1}) \lesssim e^{-\epsilon b(b_i)} = \rho_\epsilon^X(b_i).
\]
It follows that
\[
d_\epsilon (\phi(x), \phi(y)) \leq \sum_{i=0}^{N-1} d_\epsilon(b_i, b_{i+1}) \lesssim \sum_{i=0}^{N-1} \rho_\epsilon^X(b_i).
\]
But we have
\[ 
\sum_{i=0}^{N-1} \rho_\epsilon^X(b_i) \asymp \int_\gamma \rho_\epsilon^Y \mathrm{ds},
\]
so taking the infimum over all such curves gives
\[
d_\epsilon (\phi(x), \phi(y)) \lesssim d_\epsilon(x, y).
\]
On the other hand, we have
\begin{equation}\label{allu_jose_004_020}
d_X(\phi(x), \phi(y)) \geq d_Y(x, y) -\lambda\geq 2.
\end{equation}
Set $x^\prime=\phi^{-1}\left(\phi(x)\right)$ and $y^\prime = \phi^{-1}\left(\phi(y)\right)$.  
An argument analogous  to the above yields 
\[
d_\epsilon(x^\prime, y^\prime) \lesssim d_\epsilon(\phi(x), \phi(y)).
\]
By definition of $\phi^{-1}$, for each $z\in Y$ we have  $d_Y(z, \phi^{-1}(\phi(z))) \leq \lambda$ . 
Then it immediately follows from Lemma \ref{allu_jose_004_015} that 
\[
d_\epsilon(x, x^\prime) \lesssim e^{-\epsilon b(x)} \quad \text{and} \quad d_\epsilon(y, y^\prime) \lesssim e^{-\epsilon b(y)}.
\]
Therefore, we have
\begin{align}
d_\epsilon(x, y) &\leq d_\epsilon(x, x^\prime)+ d_\epsilon(x^\prime, y^\prime) + d_\epsilon(y^\prime, y)\nonumber\\
&\lesssim e^{-\epsilon b(x)} + d_\epsilon(\phi(x), \phi(y)) + e^{-\epsilon b(y)} \label{allu_jose_004_021}.
\end{align}
Moreover, from Lemma \ref{allu_jose_004_006a}, for any curve $\beta$ joining $\phi(x)$ and $\phi(y)$, we have
\begin{align*}
\int_\beta\rho_\epsilon^X \mathrm{ds} &\asymp  \sum_{i=0}^{N-1} \rho_\epsilon^X(b_i) \geq  \rho_\epsilon^X(\phi(x)) + \rho_\epsilon^X(\phi(a_{N-1})) \\
&\gtrsim \rho_\epsilon^X (\phi(x)) + \rho_\epsilon^X (\phi(y)).
\end{align*}
Taking the infimum yields us
\begin{equation}\label{allu_jose_004_022}
d_\epsilon(\phi(x), \phi(y))\gtrsim e^{-\epsilon b^\prime(\phi(x))} + e^{-\epsilon b^\prime(\phi(y))}.
\end{equation}
From \eqref{allu_jose_004_009c}, for every $z \in Y$ we have
\begin{equation}\label{allu_jose_004_023}
e^{-\epsilon b(z)} \lesssim e^{-\epsilon b^\prime(\phi(z))}. 
\end{equation}
Therefore, combining \eqref{allu_jose_004_023} and 
\eqref{allu_jose_004_022} with \eqref{allu_jose_004_021}, we conclude
\begin{align*}
d_\epsilon(x, y) &\lesssim  e^{-\epsilon b^\prime(\phi(x))} +d_\epsilon(\phi(x), \phi(y)) + e^{-\epsilon b^\prime(\phi(y))}\\
& \lesssim d_\epsilon(\phi(x), \phi(y)).
\end{align*}
Hence, the proof is complete.
\end{proof}
\section{Main result and proof}\label{allu_jose_004_024a}

Let $Y$ be a $\delta$-hyperbolic space that is $K$-roughly starlike with respect to $\omega\in \partial_\infty Y$.
Further, let $\phi :Y \to X$ be a $\lambda$-rough isometry such that $\phi^{-1}$ is also $\lambda$-rough isometry.
Then an argument similar to that in \cite[Remark 2.9]{lindquist_shanmu_2021} shows that $X$ is $K^\prime$-roughly starlike with respect to $\omega^\prime=\phi(\omega)\in \partial_\infty X$, where $K' = K^\prime(\delta, \lambda, K)$. Therefore, in the subsequent theorem, it suffices to assume that $X$ is roughly starlike.

\begin{theorem}\label{allu_jose_004_024}
Let $\phi \colon Y \to X$ be a $\lambda$-rough isometry between two proper Gromov hyperbolic spaces $(X, d_X)$ and $(Y, d_Y)$. Suppose $\omega\in \partial_\infty Y$ and let $\omega^\prime = \phi(\omega) \in \partial_\infty X$.
If $X$ is roughly starlike with respect to $\omega^\prime$ and $\epsilon>0$ is such that $(X_\epsilon, d_\epsilon)$ is a uniform space, then $(Y_\epsilon, d_\epsilon)$ is also a uniform space.
\end{theorem}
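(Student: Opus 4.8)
The plan is to verify the two defining conditions of $A$-uniformity for $Y_\epsilon$ directly, splitting into a local and a global regime according to the size of $d_Y(x,y)$. In the local regime $d_Y(x,y)\le 4+\lambda$, Lemma \ref{allu_jose_004_015} already exhibits the hyperbolic geodesic joining $x$ and $y$ as a uniform curve in $Y_\epsilon$, so there is nothing to do. Thus I may assume $d_Y(x,y)>4+\lambda$, which in particular lands in the range $d_Y(x,y)\ge 2+\lambda$ where Lemma \ref{allu_jose_004_019} is available, and where $d_X(\phi(x),\phi(y))\ge d_Y(x,y)-\lambda\ge 2$ since $\phi$ is a $\lambda$-rough isometry.

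Since $X_\epsilon$ is uniform, I fix a uniform curve $\sigma$ in $X_\epsilon$ from $\phi(x)$ to $\phi(y)$ and discretize it in the base metric: I choose $p_0=\phi(x),p_1,\dots,p_M=\phi(y)$ along $\sigma$ with consecutive $d_X$-arclength spacing $q\in[1,2)$, so that Lemma \ref{allu_jose_004_006a} gives $l_\epsilon(\sigma)\asymp\sum_i\rho_\epsilon^X(p_i)$ along with the analogous comparison for every partial subcurve of $\sigma$. Setting $q_i=\phi^{-1}(p_i)\in Y$ one has $d_Y(q_i,q_{i+1})\le q+\lambda\le 4+\lambda$, so by Lemma \ref{allu_jose_004_015} the hyperbolic geodesics $\gamma_i$ in $Y$ joining $q_i$ to $q_{i+1}$ are short uniform curves. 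The transported curve $\Gamma$ is the concatenation of a short geodesic from $x$ to $q_0$, the geodesics $\gamma_0,\dots,\gamma_{M-1}$, and a short geodesic from $q_M$ to $y$, the two end-geodesics having $d_Y$-length $\le 2\lambda$ by \eqref{allu_jose_004_001aa}.

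For quasiconvexity I combine $l_\epsilon(\gamma_i)\asymp\rho_\epsilon^Y(q_i)\asymp\rho_\epsilon^X(p_i)$, where the first comparison is Lemma \ref{allu_jose_004_015} and the second is the Busemann comparison $\rho_\epsilon^Y\asymp\rho_\epsilon^X\circ\phi$ coming from Lemma \ref{allu_jose_004_009a}. Summing and absorbing the two harmless end-terms yields $l_\epsilon(\Gamma)\asymp\sum_i\rho_\epsilon^X(p_i)\asymp l_\epsilon(\sigma)$, and then $l_\epsilon(\sigma)\lesssim d_\epsilon(\phi(x),\phi(y))\asymp d_\epsilon(x,y)$ by the quasiconvexity of $\sigma$ and Lemma \ref{allu_jose_004_019}. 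For the double cone condition I take $z\in\Gamma$; if $z$ lies on $\gamma_j$ I compare the two $\Gamma$-halves at $z$ with the two $\sigma$-halves at $p_j$ using the partial-sum comparisons above. The double cone condition for $\sigma$ bounds the shorter $\sigma$-half by $A\,\delta_\epsilon^X(p_j)$, and I transport this bound to $Y$ through $\delta_\epsilon^X(p_j)\asymp\delta_\epsilon^X(\phi(q_j))\asymp\delta_\epsilon^Y(q_j)$ (Harnack for the $\le\lambda$ perturbation $\phi(q_j)$ of $p_j$, followed by Lemma \ref{allu_jose_004_014}). Finally $\delta_\epsilon^Y(q_j)\lesssim\delta_\epsilon^Y(z)$, because $\delta_\epsilon$ is $1$-Lipschitz in $d_\epsilon$, $d_\epsilon(q_j,z)\le l_\epsilon(\gamma_j)\lesssim\rho_\epsilon^Y(q_j)$, and $\rho_\epsilon^Y(q_j)\asymp\rho_\epsilon^Y(z)\lesssim\delta_\epsilon^Y(z)$ by the always-valid lower bound of Lemma \ref{allu_jose_004_010}. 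Points $z$ on the two end-geodesics are handled directly by the same lower bound. Matching the shorter $\sigma$-half to the correct $\Gamma$-half then gives $\min\{l_\epsilon(\Gamma[x,z]),l_\epsilon(\Gamma[z,y])\}\lesssim\delta_\epsilon^Y(z)$.

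I expect the double cone transfer to be the main obstacle. The delicate feature is that I must never use the upper bound $\delta_\epsilon^Y\lesssim\rho_\epsilon^Y$: this would require rough starlikeness of $Y$ with respect to a boundary point other than $\omega$, which is not assumed, so the second part of Lemma \ref{allu_jose_004_010} is unavailable on $Y$. All of the ``upper'' control must therefore be supplied by the given uniformity of $X_\epsilon$ and carried back through the comparison of distance-to-boundary functions in Lemma \ref{allu_jose_004_014}. Keeping the partial-length comparisons and the bookkeeping of which half is shorter consistent between $\sigma$ and $\Gamma$---so that the $\sigma$-side estimate lands on the correct $\Gamma$-half---is where the care is needed; justifying the $d_X$-rectifiability of $\sigma$ required for the discretization is a secondary technical point, handled by noting that a uniform curve stays at controlled distance from $\partial_\epsilon X$ away from its endpoints.
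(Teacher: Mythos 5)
Your plan is correct and follows essentially the same route as the paper's own proof: the same local/global split via Lemma \ref{allu_jose_004_015}, the same unit-spacing discretization of a uniform curve in $X_\epsilon$ pulled back through $\phi^{-1}$ and rejoined by short hyperbolic geodesics in $Y$, the same quasiconvexity estimate via Lemmas \ref{allu_jose_004_006a}, \ref{allu_jose_004_009a} and \ref{allu_jose_004_019}, and the same double-cone transfer through Lemma \ref{allu_jose_004_014} combined with the always-valid bound $\rho_\epsilon^Y \lesssim \delta_\epsilon$ from Lemma \ref{allu_jose_004_010}. The differences are only cosmetic (you keep the pulled-back endpoints $q_0, q_M$ and insert two $\leq 2\lambda$ end-geodesics, whereas the paper joins $x$ directly to $\phi^{-1}(b_1)$ and $\phi^{-1}(b_{N-1})$ to $y$), and your explicit care to avoid the starlikeness-dependent upper bound $\delta_\epsilon \lesssim \rho_\epsilon^Y$ on $Y$ is consistent with what the paper's argument actually uses.
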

\begin{proof}
Let $x, y\in Y$. 
If $d_Y(x, y) \leq 4+\lambda$, then by Lemma \ref{allu_jose_004_015} we know that the hyperbolic geodesic joining $x$ to $y$ is a uniform curve in $(Y_\epsilon, d_\epsilon)$.
Therefore, it suffices to consider points $x, y\in Y$ satisfying $d_Y(x, y)\geq 4+\lambda$.
For such points $x, y$ we have $d_X(\phi(x), \phi(y))\geq 4$. 
Since $(X_\epsilon, d_\epsilon) $ is a uniform space, let $\gamma$ be a uniform curve in $X_\epsilon$ joining $\phi(x)$ to $\phi(y)$.
Then it is clear that $l(\gamma)\geq 4$.
Therefore, by Lemma \ref{allu_jose_004_006a} and Lemma \ref{allu_jose_004_019} we have
\[
d_\epsilon(x, y) \asymp d_\epsilon(\phi(x), \phi(y)) \asymp \int_\gamma \rho_\epsilon^Y \mathrm{ds}.
\]
We set $b_i =\gamma(iq),\, q=\tfrac{L}{N},\, L\coloneq l(\gamma) \text{ and } N\leq L <N+1$.
Now, we apply Lemma \ref{allu_jose_004_009e} to $X$ with $\phi^{-1}$ instead of $\phi$. 
In conjunction with the previous estimate we obtain
\[ 
d_\epsilon(x, y) \asymp \sum_{i=0}^{N-2} \rho_\epsilon^Y \left(\phi^{-1}(b_i)\right) + \rho_\epsilon^Y\left(\phi^{-1}\left(\phi(y\right)\right).
\]
Note that, $d_Y(y, \phi^{-1}\left(\phi(y\right)) \leq \lambda$ and $d_Y(x, \phi^{-1}\left(\phi(x\right) \leq \lambda$. Therefore, we obtain,
\[
d_\epsilon(x, y) \asymp \rho_\epsilon^Y\left(\phi^{-1}\left(\phi(x\right)\right) +\sum_{i=1}^{N-2} \rho_\epsilon^Y \left(\phi^{-1}(b_i)\right) + \rho_\epsilon^Y\left(\phi^{-1}\left(\phi(y\right)\right).
\]
Moreover, we observe that
\[
d_X(b_i, b_{i+1}) \leq l\left(\gamma|_{[iq, (i+1)q]}\right) =q \leq 1.
\]
Thus, we have
\[
d_Y \left(\phi^{-1}(b_i), \phi^{-1}(b_{i+1})\right) \leq 1+\lambda.
\]
In a similar way, we see that
\[
d_Y \left(x, \phi^{-1}(b_1)\right) \leq 1+2\lambda \quad \text{and} \quad d_Y\left(y, \phi^{-1} (b_{N-1}) \right)\leq 1+2\lambda.
\]
Now, we construct a concatenation $\beta$ of geodesics in $Y$ and show that $\beta$ is our desired uniform curve.
Let $\beta_0$ be the hyperbolic geodesic in $Y$ joining $x $ to $\phi^{-1}(b_1)$ and $\beta_{N-1}$ be the hyperbolic geodesic joining $\phi^{-1}(b_{N-1})$ to $y$. 
For $i= 1,2, \cdots N-2$, set $\beta_i$ as the geodesic in $Y$ with endpoints $\phi^{-1}(b_i)$ and $\phi^{-1}(b_{i+1})$ and $\beta $ be the concatenation of $\beta_i $ for $i = 0, 1, \cdots N-1$. Therefore, by Lemma \ref{allu_jose_004_015}, for $i= 1,2, \cdots N-2 $ we obtain
\begin{align*}
l_\epsilon(\beta_i) &\asymp \rho_\epsilon\left(\phi^{-1}(b_i)\right) 
d_Y\left(\phi^{-1}(b_i), \phi^{-1}(b_{i+1})\right)\\
&\lesssim \rho_\epsilon^Y\left(\phi^{-1}(b_i)\right).
\end{align*}
Similarly, for $\beta_0$ and $\beta_{N-1}$ we have
\[
l_\epsilon(\beta_0) \lesssim \rho_\epsilon^Y(x) \quad \text{and} \quad l_\epsilon(\beta_{N-1}) \lesssim \rho_\epsilon^Y(y).
\]
Therefore, we deduce that
\begin{align*}
d_\epsilon(x, y) &\asymp \rho_\epsilon^Y(x) + \sum_{i=1}^{N-2} \rho_\epsilon^Y \left(\phi^{-1}(b_i)\right) + \rho_\epsilon^Y(y)\\
& \gtrsim  \sum_{i=0}^{N-1} l_\epsilon(\beta_i) = l_\epsilon(\beta).
\end{align*}
Thus $\beta$ is a quasiconvex curve in $Y_\epsilon$ with endpoints $x$ and $y$.\vspace{2mm}

It remains to show that $\gamma$ satisfies the double cone condition. Let $z\in \beta$. If $z\in \beta_0$ then
\[
\min \left\{ l_\epsilon\left(\beta[x, z]\right), l_\epsilon\left(\beta[z, y]\right)\right\} = l_\epsilon \left(\beta[x, z]\right).
\]
Also, $d_Y(x, z) \leq 1+2\lambda$. Therefore, by similar arguments as in Lemma \ref{allu_jose_004_015} and by applying Lemma \ref{allu_jose_004_010}, we obtain that 
\[
\delta_\epsilon(z) \gtrsim \rho_\epsilon^Y (z) \gtrsim l_\epsilon\left(\beta[x, z]\right).
\]
Similarly, the conclusion holds if $z\in \beta_{N-1}$. Thus, we may assume that $z\in \beta_i$ for some $1\leq i \leq N-2$.
Since $\gamma $ is a uniform curve joining $\phi(x)$ to $\phi(y)$, we have 
\[
\min \left\{ l_\epsilon \left(\gamma[\phi(x), b_i]\right), l_\epsilon \left(\gamma[b_i, \phi(y)]\right)\right\} \leq A \delta_\epsilon(b_i).
\]
We first assume that 
\[ 
\min \left\{ l_\epsilon \left(\gamma[\phi(x), b_i]\right), l_\epsilon \left(\gamma[b_i, \phi(y)]\right)\right\} = l_\epsilon \left(\gamma[\phi(x), b_i]\right).
\]
Then, by Lemma \ref{allu_jose_004_010} we obtain
\[
\delta_\epsilon(z) \asymp \delta_\epsilon\left(\phi^{-1}(b_i)\right) \asymp \delta_\epsilon\left(b_i\right)\geq \frac{1}{A} l_\epsilon \left(\gamma[\phi(x), b_i]\right).
\]
Moreover, we have
\begin{equation}\label{allu_jose_004_025}
l_\epsilon\left(\gamma[\phi(x), a_i]\right) \asymp \sum_{j=0}^i \rho_\epsilon^X(a_j) \asymp \sum_{j=0}^i \rho_\epsilon^Y \left(\phi^{-1}(b_j)\right) \gtrsim l_\epsilon\left(\beta[x, z]\right).
\end{equation}
Thus we conclude that 
\[
\delta_\epsilon(z) \gtrsim l_\epsilon\left(\beta[x, z]\right).
\]
If
 \[
\min \left\{ l_\epsilon \left(\gamma[\phi(x), b_i]\right), l_\epsilon \left(\gamma[b_i, \phi(y)]\right)\right\} = l_\epsilon \left(\gamma[b_i, \phi(y)]\right)
\]
then we change the roles of $x$ and $y$ and take the sum over all $j$ from $i$ to $N$ in the estimate \eqref{allu_jose_004_025} to reach the conclusion.
Hence, consolidating all the above estimates we obtain
\[
\min\left\{ l_\epsilon\left(\beta[x, z]\right), l_\epsilon\left(\beta[z, y]\right)\right\} \lesssim \delta_\epsilon(z).
\]
This completes the proof.
\end{proof}

\section{Failure of the result for $(\kappa, \tau)$-rough similarities} \label{allu_jose_004_026}
A map $\phi : Y \to X$ is said to be a $(\kappa, \tau)$-rough similarity if $\kappa >0$ and $\tau \geq 0$ are such that 
\[
\kappa d_Y(x, y) - \tau \leq d_X\left(\phi(x), \phi(y)\right) \leq \kappa d_Y(x, y) + \tau,
\]
for every $x, y \in Y$, and the Hausdorff distance between $\phi(Y)$ and $X$ is at most $\tau$. 
Since rough isometries are $(1, \tau)$- rough similarities, it is natural to ask whether the conclusion of our main result, Theorem \ref{allu_jose_004_024}, remains valid if we replace rough isometries with rough similarities. 
That is, if both $X$ and $Y$ are Gromov hyperbolic spaces and $\epsilon >0$ is such that $X_\epsilon$ is uniform, and $\phi : Y \to X$ is a rough similarity, is $Y_\epsilon$ also a uniform domain? 
In this section we provide an example to show that this is not the case.
\vspace{2mm}

Let $X$ be the upper half plane $\mathbb{H}$ equipped with the hyperbolic metric $h$ induced by the density $\theta(z)={\mathrm{Im}(z)}^{-1}$. 
It is known that $X$ is Gromov $\delta$-hyperbolic with constant $\delta= \log 3$. Choose $b(x+\iota y) = -\mathrm{ln }\,\, y$, as the Busemann function based at $\infty\in \partial_\infty X$, and set $\rho_\epsilon^X (z) = e^{-\epsilon b(z)}$.
Then by \cite[Theorem 1.2]{zhou_ponnusamy_rasila_2025}, we see that there exists $\epsilon_0>0$ such that $X_{\epsilon_0}$ is an unbounded uniform space. 
For a fixed $\epsilon>0$, choose $Y$ as the same upper half plane equipped with the scaled hyperbolic metric $h^\prime(z_1, z_2) = \frac{\epsilon_0}{\epsilon}h(z_1, z_2)$. 
It follows that the identity map  $\mathrm{id}:\left(Y, h^\prime\right) \to \left(X, h\right)$ is a $\left(\frac{\epsilon_0}{\epsilon}, 0\right)$-rough similarity. 
Furthermore, it is easy to see that $Y_\epsilon = X_{\epsilon_0}$, which implies that $Y_\epsilon$ is a uniform space.
However, for  $\epsilon =2$, we show that $X_\epsilon$ is not a uniform space. \vspace{2mm}

We begin by observing that a hyperbolic circle centred at $\iota$ with radius $r$ has hyperbolic length 
\[
l_h(C_r) = \pi \left(e^r - e^{-r}\right).
\]
For $R\gg 1$, let
\[
z_R = \left(\frac{e^{2R}-1}{e^{2R}+1}, \frac{2e^{2R}}{e^{2R}+1}\right) \quad \text{and} \quad w_R = \left(\frac{e^{-2R}-1}{e^{-2R}+1}, \frac{2e^{-2R}}{e^{-2R}+1}\right).
\]
A simple calculation shows that the length of the geodesic ray joining $z_R$ to $1$ has length $\frac{2}{e^{2R}+1}$ and $\partial X_\epsilon$ has only one point, where $\epsilon = 2$. Therefore, 
\[
d_2 (z_R, w_R) \leq \frac{4}{e^{2R}+1}.
\]
If possible suppose that $X_\epsilon$ is $A$-uniform space for some constant $A\geq 1$. Let $\gamma$ be a uniform curve joining $z_R$ to $w_R$. 
Since $\gamma$ is compact, it is contained in a closed hyperbolic disk centred at $\iota$. 
Let $T$ be the smallest radius of such a disk. Thus, there must be a point $z^\prime \in \gamma$ such that 
\[
h(z^\prime, \iota) = T.
\]
Without loss of generality, we may assume that  
\[
l_\epsilon\!\left(\gamma[z_R, z^\prime]\right) \leq l_\epsilon\! \left(\gamma[z^\prime, w_R]\right).
\]
Then, by \eqref{allu_jose_004_006} we obtain
\begin{align*}
l_\epsilon\!\left(\gamma[z_R, z^\prime]\right) &= \int_{\gamma[z_R, z^\prime]} \rho_\epsilon^X |\mathrm{du}|\\
&\geq \exp\left\{-20\delta\right\}\rho_\epsilon^X(\iota)\int_R^T e^{-\epsilon t } \mathrm{dt}\\
&= 3^{-20} \frac{e^{-\epsilon R}-e^{-\epsilon T}}{\epsilon}.
\end{align*}
Since $\epsilon=2$, the double cone condition  \eqref{allu_jose_004_001b} implies
\[
3^{-20} \frac{e^{-2 R}-e^{-2 T}}{2} \leq \frac{2}{e^{2T}}.
\]

Therefore, we must have 
\[
T\leq R + \frac{\mathrm{ln}\left(4\cdot 3^{20}A +1\right)}{2}.
\]
Let $T^\prime$ be the largest number such that $\gamma$ lies in the complement of the hyperbolic ball centred at $\iota$ with radius $T^\prime$.
Then, similarly as before, we obtain
\[
l_\epsilon(\gamma) \geq 3^{-20} \frac{e^{-\epsilon T^\prime}-e^{-\epsilon R}}{\epsilon}.
\]
However, by the quasiconvex condition \eqref{allu_jose_004_001a} of a uniform arc for $\epsilon =2$ we obtain 
\[
3^{-20} \frac{e^{-2 T^\prime}-e^{-2 R}}{2}\leq A d_2 (z_R, w_R) \leq \frac{4A}{e^{2R}+1} \leq \frac{4A}{e^{2R}}.
\]
Simplifying the above expression yields
\[
T^\prime \geq R- \frac{\mathrm{ln}\left(8\cdot 3^{20}A+1\right)}{2}.
\]
Thus, we obtain that the curve is lying inside the annulus centred at $\iota$ with hyperbolic inner radius $R- {\mathrm{ln}\left(8\cdot 3^{20}A+1\right)}/{2}$ and hyperbolic outer radius $R + {\mathrm{ln}\left(4\cdot 3^{20}A +1\right)}/{2}$.
\vspace{2mm}

Thus, using the fact that$R\gg 1$ we obtain
\begin{align*}
l_2(\gamma) &= \int_\gamma \rho_2^X (u) |\mathrm{du}| \\
&\geq 3^{-20} \exp\left\{-2 \left(R + \frac{\mathrm{ln}\left(4\cdot 3^{20}A +1\right)}{2}\right)\right\}l_h(\gamma)\\
&\geq 3^{-20} \frac{\pi}{4} e^{-R} \left(4\cdot 3^{20}A +1\right)^{-1} \left(8\cdot 3^{20}A+1\right)^{-1/2}.
\end{align*}
Combining with the quasiconvexity \eqref{allu_jose_004_001a} of $\gamma$ we see that 
\[
C_A e^{-R} \leq A d_2 (z_R, w_R) \leq \frac{4A}{e^{2R}+1} \leq {4A}{e^{-2R}},
\]
where $C_A = 3^{-20} \frac{\pi}{4} \left(4\cdot 3^{20}A +1\right)^{-1} \left(8\cdot 3^{20}A+1\right)^{-1/2} $.
But this is not possible for large $R$. Since the uniformity constant $A$ was arbitrary, we conclude that the space $X_\epsilon$ for $\epsilon =2$ is not uniform. 
\vspace{2mm}

\noindent{\bf Acknowledgement.}
The first author thanks SERB-CRG and the second author's research work is supported by CSIR-UGC.\\
\noindent\textbf{Compliance of Ethical Standards:}\\
\noindent\textbf{Conflict of interest.} The authors declare that there is no conflict  of interest regarding the publication of this paper.\vspace{1.5mm}\\
\noindent\textbf{Data availability statement.}  Data sharing is not applicable to this article as no datasets were generated or analyzed during the current study.\vspace{1.5mm}\\
\noindent\textbf{Authors contributions.} Both the authors have made equal contributions in reading, writing, and preparing the manuscript.

\end{document}